\def\arXiv#1{\href{http://arxiv.org/abs/#1}{arXiv:#1}}
\newcolumntype{L}{>{$}l<{$}}
\def\?[#1]{\textbf{[#1]}\marginpar{\Large{\textbf{??}}}}
\newtheorem*{thm*}{Theorem}
\newtheorem{prop}{Proposition}
\newtheorem{thm}[prop]{Theorem}
\newtheorem{defi}[prop]{Definition}
\newtheorem{lem}[prop]{Lemma}
\newtheorem{cor}[prop]{Corollary}
\numberwithin{equation}{section}
\numberwithin{prop}{section}
\theoremstyle{definition}
\renewcommand{\Re}{\mathop{\rm Re}\nolimits}
\renewcommand{\Im}{\mathop{\rm Im}\nolimits}
\DeclareMathOperator{\Op}{Op}
\DeclareMathOperator{\supp}{supp}
\DeclareMathOperator{\comp}{comp}
\DeclareMathOperator{\WF}{WF}
\newcommand{\Dcal}{{\mathcal D}}
\newcommand{\Ecal}{{\mathcal E}}
\newcommand{\Fcal}{{\mathcal F}}
\newcommand{\Hcal}{{\mathcal H}}
\newcommand{\Ocal}{{\mathcal O}}
\newcommand{\Mcal}{{\mathcal M}}
\newcommand{\Pcal}{{\mathcal P}}
\newcommand{\Kcal}{{\mathcal K}}
\newcommand{\Rcal}{{\mathcal R}}
\newcommand{\Xcal}{{X_1}}
\newcommand{\RR}{{\mathbb R}}
\newcommand{\CC}{{\mathbb C}}
\newcommand{\Ss}{{\mathbb S}}
\newcommand{\NN}{\mathbb{N}}
\newcommand{\HH}{\mathbb{H}}
\newcommand{\El}{{\rm ell}}
\newcommand{\td}{\widetilde}
\begin{document}

\title[Spectral gap]{Spectral gap for surfaces of infinite volume with negative curvature}

\author{Zhongkai Tao}
\email{ztao@ihes.fr}
\address{Institut des Hautes \'{E}tudes Scientifiques, 91440 Bures-sur-Yvette, France}

\begin{abstract}
We prove that the imaginary parts of scattering resonances for negatively curved asymptotically hyperbolic surfaces are uniformly bounded away from zero and provide a resolvent bound in the resulting resonance-free strip. This provides an essential spectral gap without the pressure condition. This is done by adapting the methods of \cite{nsz11}, \cite{vasy2013a}, and \cite{vacossin2022spectral} and answers a question posed in \cite{dyatlov2016zahl}.
\end{abstract}

\maketitle

\section{Introduction}
In a seminal paper, Bourgain--Dyatlov \cite{bourgain2018spectral} showed that a convex cocompact hyperbolic surface has an essential spectral gap between the unitarity axis and the set of scattering resonances. This means the Selberg zeta function has only finitely many zeros in the region $\Re s>1/2-\beta$ for some $\beta>0$. This holds without any assumptions on the Hausdorff dimension of the trapped set, in particular without a ``pressure condition" which in this case goes back to the works of Patterson and Sullivan \cite{patterson1976limit,Sullivan1979}. The purpose of this note is to generalize this result to negatively curved surfaces which are asymptotically hyperbolic in a sense described below. This is done by combining the quantum monodromy method of Nonnenmacher--Sj\"ostrand--Zworski \cite{nsz11} and Vasy's method for meromorphic continuation \cite{vasy2013a,zworski2016vasymethod,resbook} with the recent work of Vacossin \cite{vacossin2022spectral}. It answers a question posed by Dyatlov--Zahl \cite{dyatlov2016zahl} in the first paper on the fractal uncertainty principle.

Let $X$ be an even asymptotically hyperbolic manifold. This means that $X$ has a compactification $\overline{X}$, which is a manifold with smooth boundary $\partial X$, and the metric on $X$ near the boundary takes the form
\begin{equation}\label{e:even-defi}
    g=\frac{dx_1^2+g_1(x_1^2)}{x_1^2},\quad x_1|_{\partial X}=0,\quad dx_1|_{\partial X}\neq 0
\end{equation}
where $g_1(x_1^2)$ is a smooth family of metrics on $\partial X$. See \cite[\S 5.1]{resbook} for a discussion of the invariance of this definition. Let $\Delta$ be the (negative) Laplacian on $X$. We prove
\begin{thm*}
    Suppose $X$ has dimension $2$ and (strictly) negative curvature. Then there exist $C_0,\beta>0$ such that the resolvent
    \begin{equation}
        R(\lambda)=(-\Delta-1/4-\lambda^2)^{-1}:L^2_{\rm comp}(X)\to L^2_{\rm loc}(X)
    \end{equation}
    continues holomorphically from $\Im \lambda> 1$ to $\{|\lambda|>C_0,\, \Im \lambda>-\beta\}$. Moreover, for any $\chi\in C_c^\infty(X)$, we have the resolvent bound 
    \begin{equation}\label{e:res-bound}
        \|\chi(-\Delta-1/4-\lambda^2)^{-1}\chi\|_{L^2\to L^2}\leq C|\lambda|^{-1-C_1\min(0,\Im \lambda)}\log|\lambda|
    \end{equation}
    for $ \Im \lambda>-\beta$ and $|\lambda|>C_0$.
\end{thm*}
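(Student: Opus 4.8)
The plan is to combine three ingredients: Vasy's method, which turns the meromorphic continuation of $R(\lambda)$ into a semiclassical Fredholm problem on a closed surface; the quantum monodromy (open quantum map) reduction of Nonnenmacher--Sj\"ostrand--Zworski \cite{nsz11}, which localizes the resolvent estimate to a neighborhood of the hyperbolic trapped set; and the one-dimensional fractal uncertainty principle, in the variable negative curvature form of Vacossin \cite{vacossin2022spectral} (building on Bourgain--Dyatlov \cite{bourgain2018spectral} and Dyatlov--Zahl \cite{dyatlov2016zahl}), which bounds the resulting open quantum map. First I would set $\lambda=h^{-1}z$ with $h\sim|\lambda|^{-1}$ and $z$ in a fixed neighborhood of $1$; conjugating the shifted Laplacian near $\partial X$ by a suitable complex power of the boundary defining function $x_1$ in \eqref{e:even-defi} and extending smoothly across $\partial X$ (with a complex absorbing term supported in the extension) produces a non-self-adjoint, non-elliptic semiclassical operator $P_h(z)$ on a closed surface $X_1\supset X$. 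The resolvent $R(\lambda)$ between the weighted spaces of the statement is then a restriction of $P_h(z)^{-1}$ between semiclassical Sobolev spaces, so \eqref{e:res-bound} reduces to $\|P_h(z)^{-1}\|\lesssim h^{-1}\log(1/h)$ uniformly for $\Im z\ge -\beta h$, and the holomorphic continuation to the Fredholm property of $P_h(z)$ there.

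Next I would analyze the dynamics: the principal symbol of $P_h(z)$ reproduces the geodesic Hamiltonian flow on $T^*X$, and on the relevant energy surface the set of trapped trajectories is the compact hyperbolic set $K$ coming from the geodesic flow of the negatively curved surface, while outside any neighborhood of $K$ the flow is non-trapping (orbits escape into the ends, where outgoing estimates and the complex absorption take over). Standard propagation of singularities and positive commutator arguments then control $u=P_h(z)^{-1}f$ microlocally away from $K$ with at worst an $O(\log(1/h))$ loss. It remains to estimate $u$ microlocally near $K$. Following \cite{nsz11} I would build a Poincar\'e section transverse to the flow near $K$, quantize the associated return map into an open quantum map $M_h(z)$, and show that the near-$K$ piece of $u$ is controlled by $(I-M_h(z))^{-1}$; the resonances in the strip correspond to $z$ where $I-M_h(z)$ fails to be invertible.

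The crucial step is to bound $M_h(z)$. Since $K$ is a hyperbolic set whose stable and unstable slices are porous (transverse dimension strictly between $0$ and $1$), the one-dimensional fractal uncertainty principle, in the form valid for the $C^{1+\alpha}$ stable/unstable foliations of variable-curvature surfaces established in \cite{vacossin2022spectral}, yields $\|M_h(z)^N\|_{L^2\to L^2}\le C h^{\beta_0}$ for some $\beta_0>0$ and $N\sim\log(1/h)$, uniformly for $|\Im z|\le\beta h$ once $\beta$ is small. Hence $(I-M_h(z))^{-1}=(I+\cdots+M_h(z)^{N-1})(I-M_h(z)^N)^{-1}$ is bounded by $O(N)=O(\log(1/h))$, which combined with the off-trapping estimates gives $\|P_h(z)^{-1}\|\lesssim h^{-1}\log(1/h)$, for $\beta$ a fixed fraction of $\beta_0$. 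Unwinding Vasy's conjugation, this is exactly \eqref{e:res-bound} with $C_1$ coming from the $h$-dependence of the weights, and holomorphy on $\{\Im\lambda>-\beta\}$ follows from the Fredholm package.

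The main obstacle I anticipate is the interface between Vasy's framework and the NSZ/FUP machinery: \cite{nsz11} and \cite{vacossin2022spectral} work with genuine Schr\"odinger or obstacle problems on Euclidean space, whereas here $P_h(z)$ is the non-self-adjoint, non-elliptic operator produced on an extended manifold, and one must check that its trapped set is exactly $K$, that the microlocal estimates near the artificial boundary and in the complex-absorbing region splice cleanly into the Poincar\'e-section construction, and that all constants are uniform as $\Im\lambda$ ranges over $[-\beta,1]$ and $|\lambda|\to\infty$. A secondary difficulty is confirming that the merely H\"older regularity of the stable/unstable foliations of an arbitrary smooth negatively curved metric still suffices for the fractal uncertainty input in the required uniform, $z$-dependent form; this is where the analysis of \cite{vacossin2022spectral} is invoked, but transporting it to the present geometry still takes care.
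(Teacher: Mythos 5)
Your proposal follows essentially the same route as the paper: Vasy's conjugation/extension to get a Fredholm family $P(z)$ on a closed extension, the Nonnenmacher--Sj\"ostrand--Zworski reduction to a quantum monodromy operator, Vacossin's fractal-uncertainty bound $\|\Mcal(z,h)^N\|\le Ch^\gamma$ for $N\sim\log(1/h)$, and a Neumann-series bound on $(I-M)^{-1}$ yielding $\|P(z)^{-1}\|\lesssim h^{-1}\log(1/h)$. Two points deserve emphasis. First, the ``main obstacle'' you correctly anticipate --- splicing Vasy's non-self-adjoint, complex-absorbed operator into the NSZ machinery --- is precisely where the paper's actual work lies: instead of the weight functions of \cite[\S 5]{nsz11}, the global Grushin problem is closed up using only propagation estimates and a forward-solvability statement (Proposition~\ref{p:outgoing}), exploiting that errors produced by the microlocal Grushin problem have wavefront set disjoint from $\Gamma_-$ and can therefore be solved away in the outgoing direction; the remaining error $\Rcal$ is then shown to be nilpotent modulo $\Ocal(h^\infty)$. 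So this is a genuine gap in your outline rather than a routine check. Second, you take for granted that a Poincar\'e section / open quantum map reduction exists near $K$. The Bowen--Walters construction of such sections requires the trapped set to be \emph{topologically one-dimensional}, and for a variable-curvature asymptotically hyperbolic surface this is not automatic: the paper verifies it by showing, via Eberlein's dichotomy, that the limit set $\Lambda_\Gamma$ is nowhere dense (the hyperbolic ends force this) and hence totally disconnected. Your proposal omits this geometric input entirely, and without it the NSZ reduction does not get off the ground. A smaller technical point: the operator produced by the Grushin problem is $M(z,h)=I+E_{-+}=\Pi\Mcal\Pi+\Rcal_1$, not the quantized Poincar\'e map itself, so Vacossin's bound must be transferred to $M(z,h)^N$ (Proposition~\ref{p:power-M}); and the factor $|\lambda|^{-C_1\min(0,\Im\lambda)}$ in \eqref{e:res-bound} comes from the growth of $\|M(z,h)^j\|$ for $\Im z<0$ in that estimate, not from the conjugating weights.
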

The proof of the main Theorem follows from \cite{vacossin2022spectral} by reducing the problem to quantum monodromy maps using \cite{nsz11}. Although \cite{vacossin2022spectral} also uses \cite{nsz11}, our approach is different by replacing the application of microlocal weight functions with propagation estimates. This approach simplifies some aspects of \cite{nsz11} and allows a seamless application of Vasy's method \cite{vasy2013a}. The geometric component comes from the now classical work of Eberlein \cite{eberlein1972geodesic} which shows the trapped set is topologically one dimensional in our setting (see \S 2.3).

The spectral gap for open hyperbolic quantum systems has been studied since Ikawa \cite{ikawa1988decay} in mathematics and Gaspard--Rice \cite{gaspard1989scattering} in physics -- if the topological pressure (an object from thermodynamical formalism) satisfies $P(1/2)<0$, the statement of the theorem above holds with $\beta<-P(1/2)$. For an experimental manifestation of this gap, see \cite{barkhofen13}. A general spectral gap under the pressure condition was proved by Nonnenmacher--Zworski \cite{nonnenmacherzw2009quantum,nonnenmacherzw2009semiclassical}. The first advances in the direction of improving the pressure gaps were made by Naud \cite{Naud} (in the setting of constant curvature surfaces and complex dynamics) and Petkov--Stoyanov \cite{Petkov} (in the setting of obstacle scattering). These results were based on Dolgopyat's method \cite{Dol}. However, it was conjectured by Zworski \cite[Conjecture 3]{zworski2017mathematical} that the pressure condition is not necessary. Dyatlov--Zahl \cite{dyatlov2016zahl} made the first breakthrough showing a spectral gap without the pressure condition by introducing the \textit{fractal uncertainty principle}. Bourgain--Dyatlov \cite{bourgain2018spectral} proved the fractal uncertainty principle for any porous set in dimension one and showed that any (noncompact) convex cocompact hyperbolic surface has an essential spectral gap. For recent advances on the fractal uncertainty principle, see \cite{BLT,cohen-cantor,Cohen}. The spectral gap was generalized to open quantum maps in dimension $2$ by Vacossin \cite{vacossin2022spectral,vacossin2023resolvent} by combining the method of \cite{bourgain2018spectral} and \cite{dyatlov2022control}. That provided an essential spectral gap for classes of obstacles and semiclassical scattering problems.

The bound \eqref{e:res-bound} was recently used in Huang--Sogge--Tao--Zhang \cite{HSTZ} to show lossless Strichartz estimates and spectral projection estimates in the same setting.

\subsection*{Acknowledgement}
We thank Maciej Zworski for many helpful discussions, and for his encouragement to write this note. We thank St\'ephane Nonnenmacher for helpful discussions on Vacossin's paper \cite{vacossin2022spectral}. We thank Semyon Dyatlov for helpful discussions on the appendix, especially for explaining the compactness argument to me. We also thank Jialun Li and Wenyu Pan for the example of rigid acylindrical hyperbolic manifolds. The author was partially supported by the NSF grant DMS-1952939 and by the Simons Targeted Grant Award No. 896630.

\section{Microlocal preliminaries}

\subsection{Review of concepts} We use the terminology of \cite[Appendix E]{resbook}. For the reader's convenience, we review some concepts.

Let $X$ be a smooth manifold without boundary (not necessarily compact). The polyhomogeneous symbols $S^m_h(T^*X)$ of order $m$ on $T^*X$ are defined in \cite[Definition E.3]{resbook} and their quantizations are semiclassical pseudodifferential operators $\Psi_h^m(X)$ on $X$ defined in \cite[Definition E.12]{resbook}. For $a\in S^m_h(T^*X)$ there is a non-canonical construction of $\Op_h(a)\in\Psi_h^m(X)$, see \cite[Proposition E.15]{resbook}. Conversely, for $A\in \Psi_h^m(X)$, there is a canonical principal symbol $\sigma_h(A)\in S^m(T^*X)$ defined in \cite[Proposition E.14]{resbook}. 

We will consider $h$-tempered distributions on $X$, that is $u_h\in \Dcal'(X)$ such that for any $\chi\in C_c^\infty(X)$, there exists $N$ with $\|\chi u\|_{H_h^{-N}}\leq Ch^{-N}$. For an $h$-tempered distribution, the semiclassical wavefront set $\WF_h(u)\subset \overline{T}^*X$ is defined as the complement of the union of open sets $U\times V$ in $\overline{T}^*X$ such that 
\begin{equation*}
    \Fcal_h(\chi u)(\xi)=\Ocal(h^\infty\langle \xi\rangle^{-\infty}),\quad \chi\in C_c^\infty(U), \xi\in V\cap\RR_\xi^d,
\end{equation*}
where
\begin{equation}
    \Fcal_h u(\xi):=\frac{1}{(2\pi h)^{d/2}}\int_{\RR^d} u(x) e^{-ix\cdot \xi/h} dx
\end{equation}
is the semiclassical Fourier transform. The semiclassical wavefront set of an operator $A:C_c^\infty(Y)\to \Dcal'(X)$ with $h$-tempered Schwartz kernel $\Kcal_A$ is defined as (see \cite[Definition E.36]{resbook})
\begin{equation*}
    \WF_h'(A):=\{(x,\xi;y,\eta)\in \overline{T}^*(X\times Y): (x,\xi;y,-\eta)\in \WF_h(\Kcal_A)\}.
\end{equation*}
There is also a notation of wavefront set of a pesudodifferential operator $A\in \Psi_h^m(X)$, defined in \cite[Definition E.27]{resbook}, which has the property that if $A=\Op_h(a)$ then $\WF_h(A)$ is the complement of the union of open sets $W\in \overline{T}^*X$ such that
\begin{equation*}
    a(x,\xi)=\Ocal(h^\infty\langle \xi\rangle^{-\infty}),\quad (x,\xi)\in W\cap T^*X.
\end{equation*}
An operator $C_c^\infty(X)\to \Dcal'(X)$ is  called compactly supported if its Schwartz kernel is compactly supported. We define the compactly microlocalized operators $\Psi^{\rm comp}_h(X)$ as compactly supported operators $A\in\Psi^{m}_h(X)$ such that $\WF_h(A)$ is a compact subset of $T^*X$, see \cite[Definition E.28]{resbook}. For two $h$-tempered distributions $u,v\in \Dcal'(X)$, we say $u=v$ microlocally in some open subset $U\subset \overline{T}^*X$ if $\WF_h(u-v)\cap U=\varnothing$.

If $X$ is compact, we define the semiclassical Sobolev space $H_h^s(X)$ by the norm
\begin{equation*}
    \|u\|_{H_h^s}^2=\sum\limits_j\|\langle hD\rangle^s \varphi_j^*(\chi_j u) \|_{L^2}^2,
\end{equation*}
where $\chi_j$ is a partition of unity and $\varphi_j:\RR^d\supset U\to V\subset X$ are coordinate charts such that $\supp\chi_j\subset V$. When $X$ is not compact, then we can similarly define the local semiclassical Sobolev space $H_{h,{\rm loc}}^s(X)$ and the compactly supported semiclassical Sobolev spaces $H_{h,\comp}^s(X)$ as in \cite[Definition E.20]{resbook}.

If $\overline{X}$ is a compact manifold with boundary $\partial X$ and interior $X$, we embed it into a compact manifold $X_{\rm ext}$ without boundary, and define $\bar{H}_h^s(X)$ by restrictions of functions in $H_h^s(X_{\rm ext})$.

Finally we recall the microlocal estimates in \cite[Thereom E.33, E.47]{resbook}. For $A\in\Psi_h^m(X)$, the elliptic set $\El_h(A)$ is defined as $\{(x,\xi)\in \overline{T}^*X: \langle \xi\rangle^{-m}\sigma_h(A)(x,\xi)\neq 0\}$.
\begin{prop}\label{p:propagation-general}
    Let $P\in \Psi_h^m(X)$ be properly supported such that $\Im \langle\xi\rangle^{-m}\sigma_h(P)\leq 0$.
    \begin{enumerate}
        \item  Suppose $A,B_1\in\Psi_h^0(X)$ are compactly supported and $\WF_h(A)\subset \El_h(P)\cap \El_h(B_1)$. Then there exists $\chi\in C_c^\infty(X)$ such that for any $N$,
        \begin{equation}\label{e:ell-estimate}
            \|Au\|_{H_h^s}\leq C\|BPu\|_{H_h^{s-m}}+\Ocal(h^\infty)\|\chi u\|_{H_h^{-N}}.
        \end{equation}
        \item Suppose $A,B,B_1\in\Psi_h^0(X)$ are compactly supported, and for any $(x,\xi)\in\WF_h(A)$ there exists $T\geq 0$ such that for $p=\Re\sigma_h(P)$,
        \begin{equation}\label{e:prop-estimate-general}
            \exp({-T\langle \xi\rangle^{1-m}H_p})(x,\xi)\in \El_h(B),\quad \exp({-t\langle \xi\rangle^{1-m}H_p})(x,\xi)\in \El_h(B_1) \text{ for all } t\in [0,T].
        \end{equation}
        Then there exists $\chi\in C_c^\infty(X)$ such that for any $N$,
        \begin{equation*}
            \|Au\|_{H_h^s}\leq C\|Bu\|_{H_h^s}+Ch^{-1}\|B_1 Pu\|_{H_h^{s-m+1}}+\Ocal(h^\infty)\|\chi u\|_{H_h^{-N}}.
        \end{equation*}
    \end{enumerate}
\end{prop}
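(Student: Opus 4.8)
Proposition~\ref{p:propagation-general} is the pair of standard semiclassical estimates \cite[Theorems E.33 and E.47]{resbook}; my plan is to recall the two classical arguments behind them. Throughout I would use freely the symbol calculus (composition, adjoints, $[\Op_h(a),\Op_h(b)]=\tfrac hi\Op_h(\{a,b\})+\Ocal(h^2)$), the sharp G\aa rding inequality, and the $\WF_h$--calculus recalled above, together with the fact that every error operator produced below is compactly supported and smoothing with operator norm $\Ocal(h^\infty)$, so that all such errors collapse into one term $\Ocal(h^\infty)\|\chi u\|_{H_h^{-N}}$ after fixing $\chi\in C_c^\infty(X)$ equal to $1$ near the (compact) union of the relevant kernel supports.

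Part~(1) is an elliptic parametrix. Conjugating by $\langle hD\rangle^s$ I reduce to $s=0$; since $\WF_h(A)\subset\El_h(P)\cap\El_h(B_1)$ lies in an open set I pick $B_2\in\Psi_h^{\rm comp}(X)$ with $\WF_h(I-B_2)\cap\WF_h(A)=\varnothing$ and $\WF_h(B_2)\Subset\El_h(P)\cap\El_h(B_1)$, so $A=AB_2+\Ocal(h^\infty)$; near $\WF_h(B_2)$ the symbol $\sigma_h(P)$ is elliptic, so the usual iterative construction yields $Q\in\Psi_h^{\rm comp}(X)$ of order $-m$, microsupported in $\El_h(P)\cap\El_h(B_1)$, with $QP=B_2+\Ocal(h^\infty)$; a further division by $B_1$ (elliptic on $\WF_h(Q)$) gives $Q=Q'B_1+\Ocal(h^\infty)$ with $Q'\in\Psi_h^{-m}(X)$. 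Then $Au=AQPu+\Ocal(h^\infty)=(AQ')(B_1Pu)+\Ocal(h^\infty)$ with $AQ'\colon H_h^{s-m}\to H_h^{s}$ bounded, which is~\eqref{e:ell-estimate} (with $B_1$ in the role of $B$). This part is routine.

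Part~(2) is a positive--commutator estimate. After conjugating by $\langle hD\rangle^s$ and composing with elliptic factors I reduce to $s=0$, $m=1$, write $p=\Re\sigma_h(P)$, $V=H_p$ (the rescaling of $\langle\xi\rangle^{1-m}H_p$ in the hypothesis) with flow $\varphi_t$, and use compactness of $\WF_h(A)$ together with \eqref{e:prop-estimate-general} to extract, by a finite cover, a uniform time $T_0$ and a compact ``tube'' $K\Subset\El_h(B_1)$ whose interior contains all backward segments $\{\varphi_{-t}(\rho):\rho\in\WF_h(A),\ 0\le t\le T_0\}$, with exit points in a compact subset of $\El_h(B)$; after a small enlargement $K$ itself satisfies the flow hypothesis. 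The crux is then to build an \emph{escape function}: a real $q\in C_c^\infty(T^*X;[0,\infty))$ with $\supp q\Subset\El_h(B_1)$, bounded below on $K$ (in particular near $\WF_h(A)$), and $Vq\le-q+Cb_0$ with $b_0\in C_c^\infty(\El_h(B);[0,\infty))$ --- obtained by integrating a cut--off of a neighbourhood of the exit points along the backward flow, with care taken (variable integration limits, flow--box patching) so that the positive contribution to $Vq$ is localized in $\El_h(B)$. Setting $G=\Op_h(q)$ (self--adjoint and $\ge-Ch$ modulo $\Ocal(h^\infty)$) and $\Im P:=\tfrac1{2i}(P-P^*)$ (symbol $\Im\sigma_h(P)\le0$), the commutator expansion gives $\tfrac2h\Im\langle GPu,u\rangle=\langle\Op_h(Vq)u,u\rangle+\tfrac2h\Re\langle(\Im P)Gu,u\rangle+\Ocal(h)\|\Op_h(\chi_K)u\|^2+\Ocal(h^\infty)$ with $\tfrac2h\Re\langle(\Im P)Gu,u\rangle\le\Ocal(1)\|\Op_h(\chi_K)u\|^2$ by sharp G\aa rding (the symbol $\Im\sigma_h(P)\cdot q\le0$), $\chi_K\in C_c^\infty$ supported near $K$ with $\chi_K=1$ near $\WF_h(A)$. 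Inserting $Vq\le-q+Cb_0$, bounding $\langle\Op_h(b_0)u,u\rangle\lesssim\|Bu\|^2+\Ocal(h^\infty)$ by part~(1), and writing $G=G_1B_1+\Ocal(h^\infty)$ with Cauchy--Schwarz to control $\tfrac1h|\langle GPu,u\rangle|\le Ch^{-2}\|B_1Pu\|^2+\varepsilon\|\Op_h(\chi_K)u\|^2$, one reaches
\[
\|Au\|^2\le C\|Bu\|^2+Ch^{-2}\|B_1Pu\|^2+C_1\|\Op_h(\chi_K)u\|^2+\Ocal(h^\infty)\|\chi u\|_{H_h^{-N}}^2,
\]
with $C_1=\Ocal(h)$ when $\Im\sigma_h(P)$ vanishes near $K$ (as in the application, where the trapped set is interior) and $C_1=\Ocal(1)$ in general.

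The last step --- and the main obstacle, along with the escape--function construction --- is to absorb the remainder $C_1\|\Op_h(\chi_K)u\|^2$, which lives on the full tube $K$, strictly larger than the ellipticity set of $q$, so it cannot be absorbed in one step. The standard device is to run the whole argument along an infinite nested family of tubes $\WF_h(A)\subset U_1\subset K^{(1)}\subset U_2\subset K^{(2)}\subset\cdots$, all inside a fixed compact neighbourhood on which the flow hypothesis holds, each with its own escape function bounded below on the preceding one: this produces a recursion $\|\Op_h(\chi_{K^{(k)}})u\|^2\le X+\kappa\|\Op_h(\chi_{K^{(k+1)}})u\|^2+\Ocal(h^\infty)$ with $X=C\|Bu\|^2+Ch^{-2}\|B_1Pu\|^2$ and $\kappa<1$ (trivially $\kappa=\Ocal(h)$ when $C_1=\Ocal(h)$; in general after using the freedom in the escape function), so that summing the geometric series and invoking $h$--temperedness --- $\|\Op_h(\chi_{K^{(k)}})u\|\le Ch^{-N_0}$ with $N_0$ fixed since the tubes lie in a fixed compact set, whence the tail $\kappa^k h^{-2N_0}\to0$ as $k\to\infty$ --- gives $\|\Op_h(\chi_{K^{(0)}})u\|^2\le X/(1-\kappa)+\Ocal(h^\infty)$, hence $\|Au\|\le C\|Bu\|+Ch^{-1}\|B_1Pu\|+\Ocal(h^\infty)\|\chi u\|_{H_h^{-N}}$; undoing the reductions restores the general $s$ and $m$. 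Everything except the escape--function construction and this cleanup iteration is routine symbol--calculus bookkeeping.
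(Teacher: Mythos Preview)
The paper does not prove this proposition at all; the sentence immediately preceding it reads ``Finally we recall the microlocal estimates in \cite[Theorem~E.33, E.47]{resbook}'', and the proposition is stated without proof as a citation of those results. So there is nothing in the paper to compare your argument against beyond the reference itself.

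Your sketch for part~(1) is the standard microlocal parametrix argument and is correct. For part~(2) the positive--commutator setup and the escape--function construction are the right ingredients, but your absorption step is not the argument used in \cite[\S E.4]{resbook} and, as written, has a gap. You obtain
\[
\|Au\|^2\le X + C_1\|\Op_h(\chi_K)u\|^2,\qquad X=C\|Bu\|^2+Ch^{-2}\|B_1Pu\|^2,
\]
and then propose to iterate over an infinite nested family of tubes, summing a geometric series with ratio $\kappa<1$. When $C_1=\Ocal(h)$ this is fine, but in the general case your only justification for $\kappa<1$ is ``after using the freedom in the escape function'', which is not an argument. The standard proof in \cite{resbook} closes the estimate differently: the sharp G\aa rding remainder lands in $H_h^{s-1/2}$, not $H_h^s$, so the problematic term is $Ch\|B_2u\|_{H_h^{s-1/2}}^2$ for some $B_2$ microsupported in the same tube; one then iterates in the Sobolev index, each step gaining $h^{1/2}$, and after finitely many steps the remainder is $\Ocal(h^N)\|\chi u\|_{H_h^{-N}}^2$. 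Alternatively, one can close in a single step by building $q$ with $H_pq\le -Mq+Cb_0$ for $M$ as large as one likes (take $q$ to decay like $e^{-Mt}$ along the flow in a flow--box); then the $-M\langle\Op_h(q)u,u\rangle$ term absorbs the $\Ocal(1)$ error. This is presumably what you intend by ``freedom in the escape function'', but it needs to be said explicitly.
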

We will also use the sharp Gårding inequality \cite[Proposition E.23]{resbook}.
\begin{prop}\label{p:garding}
    If $A\in \Psi^{2m+1}_h(X)$ is compactly supported and $\Re\sigma_h(A)\geq 0$. Then
    \begin{equation}\label{e:garding}
        \Re\langle Au,u\rangle_{L^2}\geq -Ch\|u\|_{H_h^{m}}^2.
    \end{equation}
\end{prop}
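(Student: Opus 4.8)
The result is the semiclassical sharp G{\aa}rding inequality (\cite[Proposition E.23]{resbook}); here is how I would organize a proof, in two parts. The first part is a localization to Euclidean space. Since the Schwartz kernel of $A$ is compactly supported, choosing $\chi\in C_c^\infty(X)$ equal to $1$ near its base projection gives $\langle Au,u\rangle=\langle A(\chi u),\chi u\rangle$ (as the kernel of $A$ lives where $\chi\equiv1$); covering $\supp\chi$ by coordinate charts with a subordinate partition of unity $1=\sum_j\phi_j^2$, one has $A-\sum_j\phi_j A\phi_j=\sum_j\phi_j[\phi_j,A]\in h\Psi_h^{2m}(X)$, and in a chart $\phi_j A\phi_j$ agrees with $\Op_h^w(\phi_j^2 a)$ up to a further element of $h\Psi_h^{2m}$, where $a$ represents the principal symbol $\sigma_h(A)$, so that $\Re(\phi_j^2 a)\ge0$. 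Since $|\langle Bu,u\rangle|\le Ch\|u\|_{H_h^m}^2$ for every $B\in h\Psi_h^{2m}$ (as $B$ maps $H_h^m\to H_h^{-m}$ with norm $\Ocal(h)$), it is enough to prove that
\begin{equation}\label{e:garding-Rn}
    \Re\langle\Op_h^w(a)u,u\rangle\ge-Ch\|u\|_{H_h^m}^2
\end{equation}
holds for every $a\in S_h^{2m+1}(\RR^{2n})$ with $\Re a\ge0$ that is compactly supported in $x$, the smooth positive Jacobian of the Riemannian density having been absorbed into $a$ (which preserves $\Re a\ge0$).

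The second part is Friedrichs symmetrization. Let $\psi_z$, $z=(x_0,\xi_0)\in\RR^{2n}$, be the $L^2$-normalized Gaussian wave packets concentrated at $z$ with widths $\sqrt{h/\langle\xi_0\rangle}$ in position and $\sqrt{h\langle\xi_0\rangle}$ in momentum --- equivalently, the wave packets of a suitable FBI/Bargmann transform adapted, in the Weyl--H\"ormander sense, to the class $S^{2m+1}$ --- normalized so that $\int|\psi_z\rangle\langle\psi_z|\,d\mu_h(z)=\id$ for the associated smooth measure $d\mu_h$. Put $A^+:=\int a(z)\,|\psi_z\rangle\langle\psi_z|\,d\mu_h(z)$; then $\Re\langle A^+u,u\rangle=\int\Re a(z)\,|\langle u,\psi_z\rangle|^2\,d\mu_h(z)\ge0$ automatically, so \eqref{e:garding-Rn} follows as soon as $\Op_h^w(a)-A^+\in h\Psi_h^{2m}(\RR^n)$, since then $|\langle(\Op_h^w(a)-A^+)u,u\rangle|\le Ch\|u\|_{H_h^m}^2$ as above. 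A stationary phase computation identifies $A^+$ with the Weyl quantization of an (anisotropic, $\langle\xi\rangle$-dependent) Gaussian regularization of $a$, and a Taylor expansion places the symbol of $\Op_h^w(a)-A^+$ in $hS_h^{2m}$: its leading contributions are constant multiples of $h\langle\xi\rangle^{-1}\Delta_x a$ and $h\langle\xi\rangle\Delta_\xi a$, both in $S_h^{2m}$ because $a\in S_h^{2m+1}$ forces $\Delta_x a\in S_h^{2m+1}$ and $\Delta_\xi a\in S_h^{2m-1}$. This gives \eqref{e:garding-Rn}, and undoing the localization gives \eqref{e:garding}.

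The step I expect to be the real obstacle is exactly this error bound $\Op_h^w(a)-A^+\in h\Psi_h^{2m}$ with the sharp order: a symmetrizer built from isotropic $\sqrt h$-wave packets yields only an error in $h\Psi_h^{2m+1}$, i.e. a loss of $\|u\|_{H_h^{m+1/2}}$ rather than $\|u\|_{H_h^m}$, so one half order must be recovered. This is why the wave-packet widths have to be taken $\langle\xi_0\rangle$-dependent, or equivalently why one works with the FBI transform attached to the metric of the class $S^{2m+1}$ rather than the standard one; in that picture $\Op_h^w(a)$ is a Toeplitz operator with symbol $a$ modulo $h\Psi_h^{2m}$, from which the inequality is immediate. (If \eqref{e:garding} is needed only for operators microlocalized to $\{|\xi|\lesssim1\}$, as in the applications here, then $\langle\xi\rangle\sim1$, plain $\sqrt h$-wave packets already suffice, and this difficulty evaporates.) The localization and the manifest nonnegativity of $A^+$ are routine by comparison.
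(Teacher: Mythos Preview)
The paper does not prove this proposition at all: it is simply quoted as \cite[Proposition E.23]{resbook} and then invoked twice in the proof of Lemma~\ref{l:hQ}. So there is nothing to compare against on the paper's side.

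Your sketch is a correct outline of the standard argument (localization to charts, then Friedrichs/anti-Wick symmetrization), and your diagnosis of the only nontrivial point---that one needs $\langle\xi\rangle$-adapted wave packets to land the error in $h\Psi_h^{2m}$ rather than $h\Psi_h^{2m+1}$---is accurate. As you also observe, in the actual applications in this paper (Lemma~\ref{l:hQ}) the operators are in $\Psi_h^{\rm comp}$, so $\langle\xi\rangle\sim 1$ on the relevant region and the isotropic version already suffices; the full symbol-class statement is more than what is needed here.
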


\subsection{Vasy's method revisited}
Vasy \cite{vasy2013a} provided a very general method for showing meromorphic continuation of the resolvent for asymptotically hyperbolic systems. One can also look at \cite{zworski2016vasymethod} for an elementary introduction to Vasy's method and Dyatlov--Zworski \cite[Chapter 5]{resbook} for a more detailed presentation. We start by recalling \cite[Theorem 5.30, Theorem 5.33]{resbook}.

\begin{prop}\label{p:vasy}
    Let $X$ be an even asymptotically hyperbolic manifold of dimension $d+1$ with negative curvature, then there exist a compact manifold $\Xcal$ with boundary $\partial\Xcal$, containing $X$ as an open subset, and a second-order semiclassical differential operator $P(z)$ on $\Xcal$ with the following properties.
    \begin{itemize}
        \item For $z\in[-h,h]+i[-C_0h,Ch]$ and $s>C_0+1/2$,
        \begin{equation}\label{e:fredholm-map}
            P(z): D_h^s=\{u\in \bar{H}_h^s(\Xcal): P(0)u\in \bar{H}_h^{s-1}\}\to \bar{H}_h^{s-1}(\Xcal)
        \end{equation}
        is a holomorphic Fredholm family of index $0$.
    \item The set of poles of $P(z)^{-1}$ contains the set of poles of $(h^2(-\Delta-d^2/4)-(1+z)^2)^{-1}$ (with multiplicity).
    \end{itemize}
\end{prop}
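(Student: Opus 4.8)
The plan is to follow Vasy's method \cite{vasy2013a} in the form presented in \cite[Chapter~5]{resbook}, for which Propositions~\ref{p:propagation-general} and~\ref{p:garding} are the main microlocal inputs; the negative-curvature hypothesis plays no essential role at this stage (Vasy's construction applies to any even asymptotically hyperbolic manifold) and is carried along only because it is needed later, cf.\ \S 2.3. The first step is geometric and local near $\partial X$: using the normal form \eqref{e:even-defi} with boundary-defining function $x_1$, pass to the variable $\mu=x_1^2$ and conjugate $h^2(-\Delta - d^2/4) - (1+z)^2$ by a complex power $x_1^{\alpha(z)}$ with $\alpha$ affine in $z$ and determined by $d$. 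A direct computation in these coordinates shows that, after multiplication by a smooth elliptic positive factor, the coefficients of the resulting operator extend smoothly across $\{x_1=0\}$; one then glues in a collar $\{x_1\in(-\epsilon_0,0)\}$, caps it off at an artificial boundary $\partial\Xcal=\{x_1=-\epsilon_0\}$, and, following \cite[\S 5.3]{resbook}, adds a complex absorbing term supported in the nonphysical region $x_1<0$ so that no bicharacteristic returns from $\partial\Xcal$. This produces the compact manifold-with-boundary $\Xcal$ and the holomorphic family $P(z)\in\Diff^2_h(\Xcal)$.

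The second step is the Fredholm statement. One checks that the semiclassical characteristic set of $P(0)$ meets $T^*_{\partial X}\Xcal$ in two radial sets $L_\pm$ (a source and a sink for the rescaled Hamilton flow of $p=\Re\sigma_h(P(0))$), that the absorbing term makes $P(z)$ effectively elliptic near $\partial\Xcal$, and that on the rest of $\overline{T}^*\Xcal$ the operator is of real principal type with every bicharacteristic flowing in finite time into $L_\pm$ or out through $\partial\Xcal$. Combining the elliptic estimate \eqref{e:ell-estimate}, the real-principal-type propagation estimate of Proposition~\ref{p:propagation-general}(2), and the radial-point (source and sink) estimates at $L_\pm$ — the source estimate requiring $s>C_0+1/2$, where $C_0$ bounds the rescaled $\Im z/h$ over the parameter box — one propagates control of $u$ by $P(z)u$ over all of $\overline{T}^*\Xcal$, getting
\begin{equation*}
    \|u\|_{\bar H_h^s}\le Ch^{-1}\|P(z)u\|_{\bar H_h^{s-1}}+\Ocal(h^\infty)\|u\|_{\bar H_h^{-N}},
\end{equation*}
with the sharp Gårding inequality (Proposition~\ref{p:garding}) controlling the sign of the absorbing term. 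Running the same scheme for the adjoint $P(z)^*$, with source and sink interchanged and in the dual order, gives the matching estimate; together these show that $P(z):D_h^s\to\bar H_h^{s-1}$ has closed range and finite-dimensional kernel and cokernel, and a deformation within the parameter box to a regime where $P(z)$ is invertible gives index $0$. Holomorphy in $z$ is immediate from the explicit (in fact polynomial) dependence of $P(z)$ on $z$.

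For the last item one unwinds the conjugation on the physical region $X\subset\Xcal$, where $P(z)$ differs from $h^2(-\Delta - d^2/4) - (1+z)^2$ only by the invertible factors $x_1^{\pm\alpha(z)}$. Given $u$ solving $(h^2(-\Delta - d^2/4) - (1+z)^2)u=f$ with the outgoing behaviour at infinity that defines the resolvent, multiplying by $x_1^{-\alpha(z)}$ and using the radial-source estimate to control the extension across $\{x_1=0\}$ produces $\td u\in D_h^s$ with $P(z)\td u=\td f$, and conversely a pole of $P(z)^{-1}$ restricts to a pole of the resolvent; tracking this through the meromorphic Fredholm theory (Gohberg--Sigal) locates the pole set of the resolvent, with multiplicities, inside that of $P(z)^{-1}$.

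I expect the technical heart to be the radial-point analysis at $L_\pm$: proving the source and sink estimates with the precise threshold $s>C_0+1/2$, checking the sign of the subprincipal symbol at $L_\pm$ that makes them hold uniformly for $z$ in the box, and verifying the dual threshold for $P(z)^*$ so that the index-$0$ conclusion survives. The gluing and the placement of the complex absorbing term near $\partial\Xcal$, and the bookkeeping of the powers $x_1^{\pm\alpha(z)}$ in the last step, are routine but must be kept consistent with the conventions of \cite[\S 5.1--5.3]{resbook}.
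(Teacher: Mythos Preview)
Your proposal is correct and takes the same approach as the paper, which itself only sketches the argument and defers to \cite[Chapter~5]{resbook}: pass to $\mu=x_1^2$, conjugate by a complex power of $\mu$ (the paper records the precise exponents in \eqref{e:Pz-definition}), extend across $\partial X$ to $\Xcal$, and obtain the Fredholm property from the elliptic, propagation, and radial-point estimates with the threshold $s>C_0+1/2$. The only cosmetic difference is that you handle the artificial boundary $\partial\Xcal$ via a complex absorbing term in the nonphysical region, whereas the paper and \cite[\S 5.5]{resbook} use the extendible spaces $\bar H_h^s(\Xcal)$ together with a hyperbolic estimate at the spacelike boundary; both variants are standard and lead to the same conclusion.
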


We sketch the proof of Proposition~\ref{p:vasy} and refer the readers to \cite[Chapter 5]{resbook} for more details and to \cite{semibook} for preliminaries on semiclassical analysis. Recall the metric on $X$ is of the form \eqref{e:even-defi} with some boundary defining function $x_1$. First one needs to change the smooth structure on $\overline{X}$ so that $\mu=x_1^2$ becomes a boundary defining function. Then we conjugate the operator $h^2\left(-\Delta-d^2/4\right)-(1+z)^2$ to another operator $P(z)$, which has the form 
\begin{equation}\label{e:Pz-definition}
    P(z)=\mu^{-1-d/4+i(z+1)/2h}\left(h^2\left(-\Delta-\frac{d^2}{4}\right)-(1+z)^2\right)\mu^{d/4-i(z+1)/2h}
\end{equation}
near the boundary $\partial X$. Then $P(z)$ is well-defined with smooth coefficients up to the boundary, and we can extend it over the boundary to some slightly larger manifold $\Xcal$. The Fredholm property of $P(z)$ follows from the  propagation estimates and radial estimates, see \cite[\S 5.5]{resbook}.

From the construction of $P(z)$ and the propagation estimates, we have the following properties. We recall the trapped set is $K_0:=\Gamma_+\cap \Gamma_-$ where the outgoing/incoming sets are defined as
\begin{equation}\label{eq:inout}
    \Gamma_{\pm}:=\{(x,\xi)\in T^*X\setminus 0: \exp(tH_{p})(x,\xi) \text{ remains bounded as }t\to \mp \infty\}\cap p^{-1}(0).
\end{equation}
\begin{itemize}
    \item (\cite[Theorem 5.34]{resbook} and Proposition~\ref{p:propagation}) There exists $Q\in \Psi_{h}^{\rm comp}(X)$ such that $P(z)-ihQ:D_h^s\to \bar{H}_h^{s-1}(\Xcal)$ is invertible for $0<h<h_0$, with the bound
        \begin{equation}\label{e:res-modi-bound}
            \|(P(z)-ihQ)^{-1}\|_{\bar{H}_h^{s-1}\to \bar{H}_h^s}\leq Ch^{-1}.
        \end{equation}
    \item (\cite[\S 5.3]{resbook}) $P(z)$ has real principal symbol. For any pre-fixed neighbourhood $V_0$ of the trapped set $K_0$, we can require $\WF_h(Q)\subset V_0$.

\item (\cite[Theorem 5.35]{resbook} and Proposition~\ref{p:propagation}) Let $p=\sigma_h(P(0))$ be the principal symbol and $\varphi^t=\exp(t\langle \xi\rangle^{-1}H_p)$, then
    \begin{equation}\label{e:wf-res}
        \WF_h'((P(z)-ihQ)^{-1})\cap \overline{T}^*(X\times X)\subset \Delta_{\overline{T}^*X}\cup \Omega_+\cup \Omega_{\Gamma}
    \end{equation}
    where $\Delta_{\overline{T}^*X}:=\{(x,\xi,x,\xi):(x,\xi)\in \overline{T}^*X\}$, 
    \begin{equation*}
        \Omega_+:=\{(\varphi^t(y,\eta),y,\eta): (y,\eta)\in T^*X,\, p(y,\eta)=0,\, t\geq 0\}
    \end{equation*}
    is the positive flowout and $\Omega_\Gamma=\Gamma_+\times\Gamma_-$.
\end{itemize}
Note we use $(P(z)-ihQ)^{-1}$ instead of $(P(z)-iQ)^{-1}$ to get a slightly better estimate in \eqref{e:forward-solve-hQ}. This works because of the following Lemma.
\begin{lem}\label{l:hQ}
    Let $P\in \Psi^m_h(X)$ and $Q\in \Psi_h^{\rm comp}(X)$. Suppose $\Im \sigma_h(P)\leq 0$ and $\Re \sigma_h(Q)>0$ near a compact set $K\subset T^*X$. Then there exists $Y_1,Y_2,Z \in \Psi_h^{\rm comp}(X)$ and $M>0$ such that 
    \begin{equation*}
        Y_1Y_2=I+\Ocal(h^{\infty}) \text{ near } K,\quad K\subset \El_{h}(Z)
    \end{equation*}
    and we have the following estimate for any $N>0$ with some $\chi\in C_c^\infty(X)$ and sufficiently small $h>0$:
    \begin{equation*}
        \Im\langle Y_1(P-iMhQ)Y_2u, u\rangle_{L^2}\leq -h\|Zu\|_{L^2}^2+\Ocal(h^\infty)\|\chi u\|_{H_h^{-N}}^2.
    \end{equation*}
\end{lem}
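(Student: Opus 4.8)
\emph{Proof proposal.} The plan is to take $Y_1=Y_2=Y$ to be a single \emph{self-adjoint} microlocal cutoff supported close to $K$; then the left-hand side collapses exactly to $\Im\langle(P-iMhQ)Yu,Yu\rangle$, and one can play the sign-$\le 0$ contribution of $P$ against the uniformly positive contribution of $Q$, choosing the coupling $M$ only at the end. First I would fix a compact neighbourhood $V_0\subset T^*X$ of $K$ on which $\Im\sigma_h(P)\le 0$ and $\Re\sigma_h(Q)\ge c_0>0$ (possible by the hypotheses and compactness), and choose phase-space cutoffs $\chi,\td\chi\in C_c^\infty(T^*X)$ with $\chi\equiv 1$ near $K$, $\supp\chi\Subset V_0$, $\td\chi\equiv 1$ near $\supp\chi$, $\supp\td\chi\subset V_0$, and $\zeta\in C_c^\infty(T^*X)$ with $\zeta>0$ near $K$ and $\supp\zeta$ a compact subset of $\{\chi\ne 0\}$. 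Quantizing these as self-adjoint operators $Y=\tfrac12(\Op_h(\chi)+\Op_h(\chi)^*)$, $\td Y$, $Z\in\Psi_h^{\rm comp}(X)$ and setting $Y_1=Y_2=Y$, we get $K\subset\{\zeta\ne0\}=\El_h(Z)$, and, since $\chi\equiv 1$ near $K$, $Y$ is microlocally the identity near $K$, i.e. $Y-I=\Ocal(h^\infty)$ near $K$, whence $Y_1Y_2=Y^2=I+\Ocal(h^\infty)$ near $K$. Writing $v:=Yu$ and using $Y^*=Y$, the left-hand side equals $\Im\langle(P-iMhQ)v,v\rangle=\langle P_Iv,v\rangle-Mh\langle Q_Rv,v\rangle$, where $P_I:=\tfrac1{2i}(P-P^*)$ and $Q_R:=\tfrac12(Q+Q^*)$ are self-adjoint with principal symbols $\Im\sigma_h(P)$ and $\Re\sigma_h(Q)$.

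Next I would apply the sharp Gårding inequality after a harmless localization. Since $\WF_h(v)\subset\supp\chi$ and $\td\chi\equiv 1$ near $\supp\chi$, one has $\td Yv=v+\Ocal(h^\infty)$, so $\langle P_Iv,v\rangle=\langle\td YP_I\td Yv,v\rangle+\Ocal(h^\infty)$ and $\langle Q_Rv,v\rangle=\langle\td YQ_R\td Yv,v\rangle+\Ocal(h^\infty)$, all $\Ocal(h^\infty)$ remainders being of the form $\Ocal(h^\infty)\|\chi_0u\|_{H_h^{-N}}^2$ with $\chi_0\in C_c^\infty(X)$. Now $-\td YP_I\td Y\in\Psi_h^{\rm comp}(X)$ is compactly supported with principal symbol $-\td\chi^2\Im\sigma_h(P)\ge 0$, while $\td YQ_R\td Y-c_0\td Y^2$ has principal symbol $\td\chi^2(\Re\sigma_h(Q)-c_0)\ge 0$; hence Proposition~\ref{p:garding}, together with $\|\td Yv\|^2=\|v\|^2+\Ocal(h^\infty)$, gives $\langle P_Iv,v\rangle\le Ch\|v\|_{L^2}^2+\Ocal(h^\infty)\|\chi_0u\|_{H_h^{-N}}^2$ and $\langle Q_Rv,v\rangle\ge c_0\|v\|_{L^2}^2-Ch\|v\|_{L^2}^2-\Ocal(h^\infty)\|\chi_0u\|_{H_h^{-N}}^2$. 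Combining these,
\begin{equation*}
\Im\langle Y_1(P-iMhQ)Y_2u,u\rangle\le(C-Mc_0+MCh)\,h\|v\|_{L^2}^2+\Ocal(h^\infty)\|\chi_0u\|_{H_h^{-N}}^2 .
\end{equation*}

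To finish I would convert $\|v\|_{L^2}=\|Yu\|_{L^2}$ into $\|Zu\|_{L^2}$ by ellipticity. Because $\WF_h(Z)=\supp\zeta$ is a compact subset of $\{\chi\ne 0\}=\El_h(Y)$ and $\Im\sigma_h(Y)=0$, Proposition~\ref{p:propagation-general}(1) applied with $Y$ in the role of $P$ gives $\|Zu\|_{L^2}\le C'\|Yu\|_{L^2}+\Ocal(h^\infty)\|\chi_0u\|_{H_h^{-N}}$, hence $\|v\|_{L^2}^2\ge\tfrac1{2C'^2}\|Zu\|_{L^2}^2-\Ocal(h^\infty)\|\chi_0u\|_{H_h^{-N}}^2$. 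Choosing $M$ so large that $Mc_0-C-MCh\ge 2C'^2$ for all sufficiently small $h$, and taking $\chi:=\chi_0$, yields exactly the asserted estimate (and, with a larger $M$, any constant in front of $-h\|Zu\|^2$).

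The main obstacle is only bookkeeping, but it is a genuine one: the constants $C$ (from the two Gårding steps) and $C'$ (from the elliptic estimate) depend solely on $P$, $Q$ and the fixed cutoffs, and $M$ must be chosen \emph{last}, large enough that the gain $-Mhc_0\|v\|^2$ from $Q$ dominates both the $\le 0$ contribution of $P$ and every $\Ocal(h)$ Gårding loss; any other ordering is circular. Two structural choices are what make the scheme close: $Y$ must be self-adjoint, so that nothing is lost in passing to $\langle(P-iMhQ)v,v\rangle$; and $\supp\zeta$ must lie inside $\El_h(Y)$, so that $\|Zu\|\lesssim\|Yu\|$ — were $\WF_h(Z)$ allowed to stick out beyond $\El_h(Y)$, the loss terms could not be absorbed into $-h\|Zu\|^2$ and the statement would fail. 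Everything else is the standard manipulation of $\Ocal(h^\infty)$ errors, and the only real inputs are Propositions~\ref{p:garding} and~\ref{p:propagation-general}.
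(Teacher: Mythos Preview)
Your argument is correct and is essentially the paper's proof: both localize by a microlocal cutoff near $K$ and apply the sharp G\aa rding inequality (Proposition~\ref{p:garding}) to $\Im P$ and to $\Re Q-c_0$, then choose $M$ large at the end. The only difference is that the paper takes $Y_1=Z^*$, $Y_2=Z$ with a single cutoff $Z$, so the two G\aa rding steps already produce $-h\|Zu\|^2$ directly; by introducing a separate self-adjoint $Y$ and a smaller $Z$ you buy nothing and must spend an extra elliptic estimate (your $\|Zu\|\le C'\|Yu\|$) to convert back, but this is harmless.
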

We remark that similar modifications are also used in \cite[Proposition 2.7]{jin2023number}.
\begin{proof}
    Let $Z\in \Psi_h^{\rm comp}$ be a microlocal cutoff to a neighbourhood of $K$. Since $\Im\sigma_h(P)\leq 0$ near $\WF_h(Z)$, by \eqref{e:garding} we have for some constant $C>0$,
    \begin{equation*}
        \Im\langle P Zu, Zu\rangle_{L^2}\leq Ch\|Zu\|_{L^2}^2+\Ocal(h^\infty)\|\chi u\|_{H_h^{-N}}^2.
    \end{equation*}
    By assumption $\sigma_h(Q)>c>0$ near $\WF_h(Z)$, by \eqref{e:garding} we have
    \begin{equation*}
        \Re\langle QZu, Zu\rangle_{L^2}\geq c\|Zu\|_{L^2}^2-Ch\|Zu\|_{L^2}^{2}-\Ocal(h^\infty)\|\chi u\|_{H_h^{-N}}^2.
    \end{equation*}
    Consequently, 
    \begin{equation*}
         \Im\langle (P-iMhQ) Zu, Zu\rangle_{L^2}\leq (C-cM)h\|Zu\|_{L^2}^2+\Ocal(h^2)\|Zu\|_{L^2}^2+\Ocal(h^\infty)\|\chi u\|_{H_h^{-N}}^2.
    \end{equation*}
    Taking $Y_1=Z^*$, $Y_2=Z$ and $Mc>C+10$ finishes the proof.
\end{proof}

Using Lemma~\ref{l:hQ}, we have the following estimate similar to \cite[Theorem 5.34]{resbook} but with $Q$ replaced  by $MhQ$.
\begin{prop}\label{p:propagation}
    Let $Q\in \Psi_h^{\rm comp}(X)$ such that $\sigma_h(Q)\geq 0$ everywhere and $\sigma_h(Q)>0$ near the trapped set $K_0$. Then for sufficiently large $M>0$, $0<h<h_0$, $z\in[-h,h]+i[-C_0h,Ch]$ and $s>C_0+1/2$, we have
    \begin{equation}\label{e:propagation}
        \|u\|_{\bar{H}^s_h(\Xcal)}\leq Ch^{-1}\|(P(z)-iMhQ)u\|_{\bar{H}_h^{s-1}(\Xcal)}.
    \end{equation}
\end{prop}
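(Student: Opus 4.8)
The plan is to run the usual Vasy-method a priori estimate for the operator $P(z)-iMhQ$, reduce matters to a control of $u$ microlocally near the trapped set $K_0$, and obtain that control from Lemma~\ref{l:hQ}. \textbf{Step 1 (away from $K_0$).} Since $\sigma_h(MhQ)$ is of size $\Ocal(h)$, the operator $P(z)-iMhQ$ has the same principal symbol and elliptic set as $P(z)$, and $\Im\langle\xi\rangle^{-2}\sigma_h(P(z)-iMhQ)=-Mh\langle\xi\rangle^{-2}\sigma_h(Q)\le 0$; moreover $\WF_h(Q)\subset V_0$ can be taken inside an arbitrarily small neighbourhood of $K_0$, so near the radial source/sink over $\partial\Xcal$ one has $P(z)-iMhQ=P(z)$ microlocally. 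Hence Proposition~\ref{p:propagation-general}(1) (ellipticity off $p^{-1}(0)$ and at fibre infinity), the radial-point estimates of \cite[\S 5.4]{resbook} (valid for $s>C_0+1/2$, uniformly in $z\in[-h,h]+i[-C_0h,Ch]$, and insensitive to the $\Ocal(h)$ absorbing term there), and Proposition~\ref{p:propagation-general}(2) (which applies directly to $P(z)-iMhQ$ thanks to the sign of $\Im\sigma_h$) all apply verbatim. Running the microlocal covering argument as in the proof of \cite[Theorem 5.34]{resbook} --- estimate near the source $L_-$, propagate forward along $H_p$ on $p^{-1}(0)$, and on the part of $\Gamma_-$ outside a prescribed neighbourhood $U'$ of $K_0$ propagate forward out of a smaller neighbourhood of $K_0$ (using that such trajectories leave $U'$ in finite time) --- I would obtain, for some $B\in\Psi_h^{\rm comp}(\Xcal)$ elliptic near $K_0$ with $\WF_h(B)\subset U'$,
\begin{equation}\label{e:away-from-trapping}
    \|u\|_{\bar{H}^s_h(\Xcal)}\le Ch^{-1}\|(P(z)-iMhQ)u\|_{\bar{H}^{s-1}_h(\Xcal)}+C\|Bu\|_{L^2}+\Ocal(h^\infty)\|u\|_{\bar{H}^{-N}_h(\Xcal)},
\end{equation}
the last term being absorbed into the left-hand side for $h$ small.

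\textbf{Step 2 (near $K_0$).} Here is where Lemma~\ref{l:hQ} enters: unlike the case of a zeroth-order $Q$ (where $\Im\sigma_h(P(z)-iQ)<0$ renders the operator microlocally elliptic near $K_0$), the absorbing term is only $\Ocal(h)$, so one must extract coercivity at order $h$. I would choose a compact neighbourhood $K$ of $K_0$ inside $X$ with $U'\subset K$ and $\sigma_h(Q)>0$ on $K$; since $\Im\sigma_h(P(z))\le 0$, Lemma~\ref{l:hQ} with $P=P(z)$ produces $Y_1,Y_2,Z\in\Psi_h^{\rm comp}$ and $M>0$ with $Y_1Y_2=I+\Ocal(h^\infty)$ near $K$, $K\subset\El_h(Z)$, and
\begin{equation*}
    h\|Zu\|_{L^2}^2\le\big|\langle Y_1(P(z)-iMhQ)Y_2u,u\rangle_{L^2}\big|+\Ocal(h^\infty)\|u\|_{\bar{H}^{-N}_h}^2.
\end{equation*}
Writing $Y_1(P(z)-iMhQ)Y_2=Y_1Y_2(P(z)-iMhQ)+Y_1[P(z)-iMhQ,Y_2]$, the first term contributes $\le C\|(P(z)-iMhQ)u\|_{\bar{H}^{s-1}_h}\|u\|_{\bar{H}^s_h}$ (a compactly microlocalised operator of order $0$), and the commutator --- which is $\Ocal_{L^2\to L^2}(h)$ because $P(z)$ has real principal symbol --- contributes $\le Ch\|u\|_{\bar{H}^s_h}^2$. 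Using $\WF_h(B)\subset K\subset\El_h(Z)$ and microlocal ellipticity to bound $\|Bu\|_{L^2}\le C\|Zu\|_{L^2}+\Ocal(h^\infty)\|u\|_{\bar{H}^{-N}_h}$, and abbreviating $E:=\|u\|_{\bar{H}^s_h}$, $N_0:=\|(P(z)-iMhQ)u\|_{\bar{H}^{s-1}_h}$, this step together with \eqref{e:away-from-trapping} yields, after absorbing the $\Ocal(h^\infty)$ errors,
\begin{equation*}
    E\le Ch^{-1}N_0+C\|Zu\|_{L^2},\qquad h\|Zu\|_{L^2}^2\le CN_0E+ChE^2.
\end{equation*}

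\textbf{Step 3 (closing the loop).} Squaring the first inequality gives $E^2\le C(h^{-2}N_0^2+\|Zu\|_{L^2}^2)$; inserting this into the second and absorbing $ChE^2$ for $h<h_0$ gives $\|Zu\|_{L^2}^2\le Ch^{-1}N_0E+Ch^{-2}N_0^2$; feeding this back produces the quadratic inequality $E^2\le C'h^{-1}N_0E+C'h^{-2}N_0^2$, whence $E\le C''h^{-1}N_0$, which is \eqref{e:propagation}. The one genuinely delicate point is Step 2: I must check that the sharp-Gårding loss in Lemma~\ref{l:hQ} is beaten by a large enough multiple $M$ of $hQ$ (this is the ``sufficiently large $M$'') and that every commutator and $\Ocal(h^\infty)$ error stays strictly of lower order, so the bootstrap in Step 3 closes with exactly an $h^{-1}$ loss and nothing worse; Step 1 is the standard bookkeeping of \cite[Chapter 5]{resbook}.
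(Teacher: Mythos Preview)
Your reduction in Step~1 matches the paper's exactly: both invoke \cite[Lemma 5.25]{resbook}, the radial estimates, and forward propagation to reduce to a microlocal control of $u$ near $K_0$.  Near $K_0$ the paper proceeds differently from your Steps~2--3.  Rather than estimating $h\|Zu\|^2$ directly, it plugs Lemma~\ref{l:hQ} into the positive-commutator scheme of \cite[Lemma~2.7]{open} to obtain an estimate of the shape
\[
\|Au\|_{H_h^s}\le C\|A_1u\|_{H_h^s}+Ch^{-1}\|(P(z)-iMhQ)u\|_{\bar H_h^{s-1}}+Ch^{1/2}\|A_2u\|_{H_h^{s-1/2}}+\Ocal(h^\infty)\|\chi u\|_{H_h^{-N}},
\]
where $\varphi^t(\WF_h(A_1))\to L_-$ as $t\to -\infty$ (so $A_1u$ is controlled by the radial estimate) and $A_2\in\Psi_h^{\comp}$ carries an honest $h^{1/2}$ gain.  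Iterating the estimate improves the $A_2$ term to $\Ocal(h^N)$ and closes the loop with no bootstrap algebra.

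Your direct bootstrap is a legitimate alternative, but Step~3 as written has a gap.  After inserting $E^2\le C(h^{-2}N_0^2+\|Zu\|^2)$ into $h\|Zu\|^2\le CN_0E+ChE^2$ you get $(1-C^2)h\|Zu\|^2\le CN_0E+C^2h^{-1}N_0^2$: the commutator contribution produces $C^2h\|Zu\|^2$ with a fixed constant, and taking $h$ small does \emph{not} make this absorbable.  The remedy is already in your hands but not implemented: the proof of Lemma~\ref{l:hQ} actually yields $\Im\langle(P(z)-iMhQ)Zu,Zu\rangle\le (C-cM)h\|Zu\|^2+\Ocal(h^2)$, so taking $M$ large gives $(cM-C)h\|Zu\|^2$ on the left with an arbitrarily large coefficient, while the commutator $[P(z),Z]\in h\Psi_h^{\comp}$ contributes $C'hE\|Zu\|$ with $C'$ independent of $M$ (and $[MhQ,Z]\in Mh^2\Psi_h^{\comp}$ is harmless for $h$ small depending on $M$).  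With this bookkeeping the substitution closes for $M$ large enough, yielding $\|Zu\|\le Ch^{-1}N_0$ and hence $E\le Ch^{-1}N_0$.  So your route works, but the phrase ``absorbing $ChE^2$ for $h<h_0$'' should be replaced by ``absorbing the commutator term into $(cM-C)h\|Zu\|^2$ for $M$ large,'' which is precisely the content of ``sufficiently large $M$'' in the statement.
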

\begin{proof}
    First, by \cite[Lemma 5.25]{resbook}, there exists $\chi_1\in C_c^\infty(\Xcal)$ such that
    \begin{equation*}
         \|u\|_{\bar{H}^s_h}\leq Ch^{-1}\|(P(z)-iMhQ)u\|_{\bar{H}_h^{s-1}}+C\|\chi_1 u\|_{H^s_h}.
    \end{equation*}
    The phase space dynamics on $\overline{T}^*\Xcal$ can be described as follows (see \cite[\S 5.4]{resbook}). There exist $\Sigma_\pm$ such that $\{\langle \xi\rangle^{-2}p=0\}\subset \overline{T}^*\Xcal$ is the disjoint union of $\Sigma_+$ and $\Sigma_-$. Moreover, $\Sigma_+\cap \overline{T}^*X=\varnothing$. For $(x,\xi)\in \Sigma_\pm$, we have two possibilities 
    \begin{itemize}
        \item $\varphi^t(x,\xi)\to L_\pm$ as $t\to \pm \infty$, where $L_\pm=\{\mu=0\}\cap \Sigma_\pm\cap\partial \overline{T}^*\Xcal$ are the radial sets.
        \item $\varphi^{t}(x,\xi)\to K_0$ as $t\to -\infty$.
    \end{itemize}
    By propagation estimates (Proposition~\ref{p:propagation-general}), it suffices to estimate near $L_\pm$ and $K_0$. Near $L_\pm$ we use the radial estimate \cite[Lemma 5.23]{resbook}. Near $K_0$ we use our Lemma~\ref{l:hQ} which gives for some microlocal cutoff $A$ to a neighbourhood of $K_0$ (see \cite[Lemma 2.7]{open})
    \begin{equation*}
        \|Au\|_{H_h^s}\leq C\|A_1u\|_{H_h^s}+Ch^{-1}\|(P(z)-ihQ)u\|_{\bar{H}_h^{s-1}}+Ch^{1/2}\|A_2u\|_{H_h^{s-1/2}}+\Ocal(h^\infty)\|\chi u\|_{H_h^{-N}},
    \end{equation*}
where $A_1$ has the property that $\varphi^t(\WF_h(A_1))\to L_-$ as $t\to-\infty$, $A_2\in \Psi_h^{\comp}(X)$ and $\chi\in C_c^\infty(X)$. Then the $A_1u$ term can be propagated to $L_-$ and the $A_2u$ term can be improved to $h^N\|\chi u\|_{H_h^{-N}}$ by iterating the estimate. 
\end{proof}
The resolvent bound \eqref{e:res-modi-bound} follows from \eqref{e:propagation}. The wavefront set estimate \eqref{e:wf-res} follows from the proof of Proposition~\ref{p:propagation}. In order to show \eqref{e:wf-res}, we need to show for any $(x_0,\xi_0;y_0,\eta_0)\in \overline{T}^*(X\times X)$ such that $(x_0,\xi_0)\notin (\Delta_{\overline{T}^*X}\cup \Omega_+\cup \Omega_{\Gamma})(y_0,\eta_0)$, there are open neighbourhoods $U$ of $(x_0,\xi_0)$ and $V$ of $(y_0,\eta_0)$ such that
\begin{equation}\label{e:wf-res-proof}
    \langle (P(z)-ihQ)^{-1}Bu, Av\rangle=\Ocal(h^\infty)\|u\|_{H_h^{-N}}\|v\|_{H_h^{-N}}
\end{equation}
 for any compactly supported $A,B\in\Psi_h^0(X)$ with $\WF_h(A)\subset U$, $\WF_h(B)\subset V$ and any $u,v\in C_c^\infty(X)$. By elliptic estimate \eqref{e:ell-estimate} we may assume $p(x_0,\xi_0)=p(y_0,\eta_0)=0$. Note $T^*X\cap\{p=0\}\subset\Sigma_-$, so $\varphi^{t}(x_0,\xi_0)\to L_- \text{ or } K_0$ as $t\to -\infty$. If $\varphi^t(x_0,\xi_0)\to L_-$ as $t\to -\infty$, we conclude \eqref{e:wf-res-proof} from the propagation estimate \eqref{e:prop-estimate-general}. If $\varphi^t(x_0,\xi_0)\to K_0$ as $t\to -\infty$, then $(x_0,\xi_0)\in \Gamma_+$. A dual estimate would then give \eqref{e:wf-res-proof} unless $(y_0,\eta_0)\in\Gamma_-$.

Now we state a forward solvability property (up to $\Ocal(h^\infty)$ error) which will be crucial to our analysis.
\begin{prop}\label{p:outgoing}
    Suppose $f\in H_{h,\comp}^{s-1}(X)$ and  $\WF_h(f)\cap \Gamma_-=\varnothing$. Then there exists $u\in \bar{H}^s_h(\Xcal)$ such that $P(z)u=f+\Ocal_{H_{h,\comp}^N}(h^\infty)$ for any $N$, and
    \begin{equation*}
        \|u\|_{\bar{H}^s_h}\leq Ch^{-1}\|f\|_{\bar{H}^{s-1}_h},\quad \WF_h(u)\cap \overline{T}^*X\subset\bigcup\limits_{t=0}^{\infty}\varphi^t(\WF_h(f)).
    \end{equation*}
\end{prop}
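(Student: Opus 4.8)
The plan is to take $u$ to be the modified resolvent of Proposition~\ref{p:propagation} applied to $f$ and to show the resulting error is microlocally negligible thanks to the hypothesis $\WF_h(f)\cap\Gamma_-=\varnothing$. So let $Q$ be the operator of Proposition~\ref{p:propagation}, with $\WF_h(Q)$ inside a neighbourhood $V_0$ of $K_0=\Gamma_+\cap\Gamma_-$ which I keep free to shrink, let $M$ be as there, and set $u:=(P(z)-iMhQ)^{-1}f$. Since $iMhQ$ is compact, $P(z)-iMhQ$ is Fredholm of index $0$ on the spaces in \eqref{e:fredholm-map}, and by \eqref{e:propagation} it is injective, hence invertible, with $\|u\|_{\bar{H}^s_h}\le Ch^{-1}\|f\|_{\bar{H}^{s-1}_h}$ uniformly in $z$; this is the norm bound. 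Since $P(z)u=f+iMhQu$, it remains only to show that $iMhQu=\Ocal_{H^N_{h,\comp}}(h^\infty)$ for every $N$, which (as $Q\in\Psi^{\comp}_h(X)$) is equivalent to $\WF_h(Q)\cap\WF_h(u)=\varnothing$.

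For the wavefront set of $u$ I would compose the bound \eqref{e:wf-res} (it holds verbatim with $Q$ replaced by $MQ$) with $\WF_h(f)\subset T^*X$: the diagonal $\Delta_{\overline{T}^*X}$ together with the elliptic estimate contributes a subset of $\WF_h(f)$, the positive flowout $\Omega_+$ contributes $\bigcup_{t\ge0}\varphi^t(\WF_h(f)\cap p^{-1}(0))$, and $\Omega_\Gamma=\Gamma_+\times\Gamma_-$ contributes nothing because $\WF_h(f)\cap\Gamma_-=\varnothing$. Thus
\[
\WF_h(u)\cap\overline{T}^*X\ \subset\ \WF_h(f)\ \cup\ \bigcup_{t\ge0}\varphi^t\bigl(\WF_h(f)\cap p^{-1}(0)\bigr)\ \subset\ \bigcup_{t\ge0}\varphi^t(\WF_h(f)),
\]
which already gives the asserted wavefront bound for $u$; over $\partial\Xcal$ the set $\WF_h(u)$ may also meet the radial sets, but that is harmless since $\WF_h(Q)\subset T^*X$.

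To finish I would argue that $\WF_h(u)\cap\overline{T}^*X$ stays away from a neighbourhood of $K_0$. The set $\WF_h(f)$ is compact and disjoint from $K_0$, hence from a neighbourhood of it by compactness alone. For the flowout term, put $K':=\WF_h(f)\cap p^{-1}(0)$: since $\Gamma_-$ is closed and flow-invariant and $K_0\subset\Gamma_-$, the set $\bigcup_{t\in[0,T]}\varphi^t(K')$ is compact and disjoint from $K_0$ for each fixed $T$; and on $p^{-1}(0)$ being forward bounded coincides with lying in $\Gamma_-$, while a forward-unbounded geodesic of a negatively curved asymptotically hyperbolic surface leaves every compact set after a finite time and never returns (convexity near infinity; cf.\ \S 2.3), so fixing a large compact neighbourhood $\Kcal$ of the convex core the nested closed sets $A_T:=\{\rho\in p^{-1}(0):\varphi^t(\rho)\in\Kcal\text{ for some }t\ge T\}$ decrease to $\Gamma_-$; compactness of $K'$ then gives a $T$ with $K'\cap A_T=\varnothing$, so $\bigcup_{t\ge T}\varphi^t(K')$ avoids $\Kcal$, hence a neighbourhood of $K_0$. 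Combining the two parts, $\WF_h(u)\cap\overline{T}^*X$ avoids some neighbourhood of $K_0$; choosing $V_0$ inside it yields $\WF_h(Q)\cap\WF_h(u)=\varnothing$, and therefore $P(z)u=f+\Ocal_{H^N_{h,\comp}}(h^\infty)$, as required.

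The step I expect to be the main obstacle is the dynamical input just used — that the forward flowout of a compact subset of $p^{-1}(0)$ disjoint from $\Gamma_-$ stays at positive distance from the trapped set, indeed from a whole neighbourhood of it. This is exactly where negative curvature and asymptotic hyperbolicity enter, through the convexity near infinity that forbids an escaping geodesic from turning back toward the core; granting it, the remainder is routine wavefront-set calculus together with the a priori bound \eqref{e:res-modi-bound}.
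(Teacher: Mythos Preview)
Your proof is correct and follows the same route as the paper's: set $u=(P(z)-ihQ)^{-1}f$, use the wavefront bound \eqref{e:wf-res} together with $\WF_h(f)\cap\Gamma_-=\varnothing$ to get $\WF_h(u)\cap\overline{T}^*X\subset\bigcup_{t\ge0}\varphi^t(\WF_h(f))$, and then take $\WF_h(Q)$ small enough near $K_0$ so that $Qu=\Ocal(h^\infty)$. The only difference is presentational: the paper asserts in one sentence that $Q$ can be chosen with $\bigcup_{t\le0}\varphi^t(\WF_h(Q))\cap\WF_h(f)=\varnothing$, while you unpack the equivalent dynamical statement (the forward flowout of a compact set disjoint from $\Gamma_-$ avoids a neighbourhood of $K_0$) via convexity at infinity---note that your sets $A_T$ are indeed closed once you use the no-return property, since then $A_T=\varphi^{-T}(\Kcal\cap p^{-1}(0))$.
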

\begin{proof}
    Since $\WF_h(f)\cap \Gamma_-=\varnothing$, we may choose a small microlocal cutoff $Q\in\Psi_h^{\rm comp}$ to a neighbourhood of $K_0$ so that the backward flow of $\WF_h(Q)$ does not intersect $\WF_h(f)$. Define $u=(P(z)-ihQ)^{-1}f$, then it suffices to show $Qu=\Ocal_{H_{h,\comp}^N}(h^\infty)$. By \eqref{e:wf-res}, $\WF_h(u)\cap \overline{T}^*X\subset \bigcup\limits_{t=0}^{\infty}\varphi^t(\WF_h(f))\cup \Omega_\Gamma\circ \WF_h(f)$. Since $\WF_h(f)\cap\Gamma_-=\varnothing$, we have $\Omega_\Gamma\circ \WF_h(f)=\varnothing$. Moreover, $\bigcup\limits_{t=0}^{\infty}\varphi^t(\WF_h(f))$ does not intersect with $\WF_h(Q)$, thus $\WF_h(u)\cap\WF_h(Q)=\varnothing$ and $Qu=\Ocal_{H_{h,\comp}^N}(h^\infty)$.
\end{proof}

\section{Quantum monodromy without weights}

In order to apply \cite{vacossin2022spectral} to obtain the spectral gap, we need to construct quantum monodromy maps as in \cite{nsz11} but in the asymptotically hyperbolic setting. In \S\ref{s:grushin} we apply Vasy's method to construct such quantum monodromy maps without using weights for $d+1$-dimensional asymptotically hyperbolic manifolds whose trapped set has topological dimension $1$. In \S\ref{s:structure} we use \cite{eberlein1972geodesic} to verify this assumption for all asymptotically hyperbolic surfaces.

\subsection{Construction of quantum monodromy maps}\label{s:grushin}
In this section, we reduce the problem to a quantum monodromy map following \cite{nsz11}. In particular we show the following.
\begin{prop}\label{p:quant-mono}
    Suppose we are in the setting of Proposition~\ref{p:vasy} and the trapped set $K_0$ has topological dimension $1$. Then there exists a holomorphic family of matrices $M(z,h)$ acting on $\CC^N$ with $N\sim h^{-d}$ for $z \in [-h,h]+i[-C_0h,Ch]$ so that the poles of $P(z)^{-1}$ are given by the zeros of $\det(I-M(z,h))$.
\end{prop}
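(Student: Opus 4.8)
The plan is to construct the quantum monodromy operator by microlocally decomposing the trapped set and tracking solutions of $P(z)u = 0$ through a neighbourhood of $K_0$, following the scheme of \cite{nsz11} but replacing the microlocal weight functions with the propagation estimates of \S 2. First I would exploit the topological-dimension-$1$ hypothesis: a classical fact (going back to \cite{eberlein1972geodesic}, cf.\ \S\ref{s:structure}) gives a finite collection of disjoint ``Poincar\'e sections'' $\Sigma_1,\dots,\Sigma_J$, each a hypersurface in $p^{-1}(0)$ transversal to $H_p$, such that every trajectory through a neighbourhood of $K_0$ crosses some $\Sigma_j$, and such that the first-return map (the classical monodromy relation) $F: \bigsqcup_j \Sigma_j \to \bigsqcup_j \Sigma_j$ is well defined on a neighbourhood of $K_0 \cap \Sigma_j$. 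One then cuts the flow near the trapped set along these sections.

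The core of the argument is to quantize each section. Using a semiclassical Fourier integral/normal-form reduction near each $\Sigma_j$ (as in \cite[\S 4]{nsz11} or \cite{open}), I would build microlocal identifications conjugating $P(z)$ near $\Sigma_j$ to $hD_{t}$ in a tubular neighbourhood, so that a microlocal solution of $P(z)u=0$ restricted to a section is an $h$-tempered family of functions on an $N_j$-dimensional model space with $\sum_j N_j =: N \sim h^{-d}$ (the scaling being the number of $h$-sized boxes needed to cover the $d$-dimensional sections). The transfer from one section to the next along the flow is implemented by a semiclassical FIO $M_{ij}(z,h)$ quantizing the relevant branch of $F$; here is where I replace \cite{nsz11}'s weighted spaces with Proposition~\ref{p:propagation-general}(2) and the forward-solvability Proposition~\ref{p:outgoing}: propagation in the open (non-trapped) region, and the radial/elliptic estimates near $L_\pm$, feed into the Grushin setup so that the only genuinely ``trapped'' data live on the sections. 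Setting $M(z,h) = (M_{ij}(z,h))$, a microlocal solution of $P(z)u=0$ with the outgoing condition corresponds, up to $\Ocal(h^\infty)$, to a fixed point $v = M(z,h)v$ on $\CC^N$. The Grushin problem for $P(z)$ is then solvable iff $I - M(z,h)$ is invertible, and a determinant identity (Schur complement, as in \cite[\S 5]{nsz11}) shows that the poles of $P(z)^{-1}$ coincide with the zeros of $\det(I - M(z,h))$, with multiplicity. Holomorphy of $M(z,h)$ in $z$ is inherited from holomorphy of $P(z)$ and of the resolvent-type parametrices used to build the transfer maps.

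Two steps need care. The first is the construction of the microlocal normal form near each section uniformly in $z$ in the given $\Ocal(h)$-box and the verification that the resulting $M_{ij}$ are honest matrices on $\CC^{N_j}$ (rather than operators on infinite-dimensional spaces): this requires that, after the normal-form reduction, the relevant solution spaces near $K_0$ be effectively finite-dimensional, which follows because microlocal solutions are determined by their restriction to the sections and the sections are compact of dimension $d$, giving $\dim \sim h^{-d}$ via a standard covering/orthonormal-basis argument in the model representation. The second, and I expect the main obstacle, is gluing: assembling the local identifications and the open-region propagation into a single Grushin problem for the global operator $P(z)$ on $\Xcal$ while keeping all errors $\Ocal(h^\infty)$ and all bounds uniform. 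Because $P(z)$ here comes from Vasy's method rather than from a compactly supported semiclassical operator, I must check that the behaviour near the boundary radial sets $L_\pm$ (handled by \cite[Lemma 5.23]{resbook}) decouples cleanly from the trapped dynamics, which is exactly the content of the wavefront estimate \eqref{e:wf-res} and Proposition~\ref{p:outgoing}; the bookkeeping to combine these with the section-to-section FIOs, and to show the determinant identity survives the $\Ocal(h^\infty)$ errors, is where the real work lies.
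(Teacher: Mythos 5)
Your outline matches the paper's strategy: Poincar\'e sections from the one-dimensionality of $K_0$, the microlocal Grushin problem of \cite[\S 4]{nsz11} quantizing the Poincar\'e map, finite-dimensional reduction of size $\sim h^{-d}$ on the sections, and a Schur-complement identity $M(z,h)=I+E_{-+}(z)$ identifying poles of $P(z)^{-1}$ with zeros of $\det(I-M(z,h))$. Two corrections on sourcing: the sections come from Bowen--Walters \cite{bowen1972expansive} (given topological dimension $1$), not from \cite{eberlein1972geodesic}, which only enters later to verify the dimension hypothesis for surfaces; and the finite-dimensionality is implemented not by a covering argument but by composing $\td R_\pm$ with the spectral projection $\Pi_j=\mathbf{1}_{>0}(Q_0^j)$ of a quantized cutoff near $\td{\mathcal T}_j$, so that $\Hcal_h^j=\Pi_j H(\td D_j)$ has dimension $\sim h^{-d}$.

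The part you defer as ``where the real work lies'' is precisely the content of the paper's proof, and it is worth naming the two mechanisms you would need. First, the microlocal Grushin solution produces an error $f$ with $\WF_h(f)\subset V_2\setminus V_1$; the paper corrects it using Proposition~\ref{p:outgoing} after invoking the observation \eqref{e:incoming-avoid} that points near $K_0$ whose forward orbit escapes a fixed neighbourhood cannot lie on $\Gamma_-$ --- this is what guarantees $\WF_h(f)\cap\Gamma_-=\varnothing$, so a forward solution $u_1$ of $\tfrac{i}{h}P(z)u_1=f$ exists and, being supported on the forward flowout, satisfies $R_+u_1=\Ocal(h^\infty)$, i.e.\ the correction does not disturb the second row of the Grushin problem. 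Second, inserting the projections $\Pi_j$ leaves a remainder $\Rcal$ involving $\td R_-(I-\Pi)$; the paper shows $\Rcal$ is nilpotent modulo $\Ocal(h^\infty)$ because $\WF_h'(\Rcal)$ propagates forward by at least the minimal return time $t_0-2\epsilon$ at each iteration while $I-\Pi$ forces the wavefront off $\Gamma_-$, so $\Rcal^{N_1}=\Ocal(h^\infty)$ and a finite Neumann series inverts $I-\Rcal$. Without these two steps the global Grushin operator $\Pcal(z)$ is not shown to be invertible with $\Ocal(1)$ bounds, and the determinant identity has no content; so while your approach is the right one, the proposal as written stops short of the proposition's actual proof.
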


The proof of this proposition follows from the construction of a Grushin problem. This construction proceeds in two steps. First one constructs a \textit{microlocal} Grushin problem near the trapped set $K_0$, which is done in \cite{nsz11} and we can directly use it here. The second step is to construct a \textit{global} Grushin problem. This is done in \cite[\S 5]{nsz11} using weight functions. Here we apply a different construction by using propagation estimates alone. This allows a simple connection with Vasy's method and gives better bounds (see the remark after \cite[Corollary 1]{vacossin2023resolvent}).

\subsubsection{Dynamical preliminaries}
Suppose the trapped set $K_0$ is topologically one dimensional. Then by \cite{bowen1972expansive} (for 
 the statement here we cite \cite[Proposition 2.1]{nsz11}), there exist finitely many compact contractible smooth (up to boundary) hypersurfaces $\Sigma_j\subset p^{-1}(0)$ so that
\begin{itemize}
    \item $\partial\Sigma_j\cap K_0=\varnothing$, $\Sigma_j\cap \Sigma_{j'}=\varnothing$, $j\neq j'$;
    \item $H_p$ is transversal to $\Sigma_j$ up to the boundary;
    \item The Hamiltonian flow starting from $K_0$ touches $\cup_j\Sigma_j$ in both directions. Moreover, we may assume the successor of a point in $\Sigma_k\cap K_0$ is in $\Sigma_j$ for some $j\neq k$.
\end{itemize}
We recall some notations from \cite{nsz11}.
Let $\mathcal{T}=K_0\cap 
\cup_j \Sigma_j$ be the reduced trapped set, and $\mathcal{T}_j=K_0\cap \Sigma_j$. Let $f:\mathcal{T}\to \mathcal{T}$ be the Poincar\'e map restricted to $\mathcal{T}$ (see \cite[\S 2.3.1]{nsz11}) and
\begin{equation*}
    \mathcal{D}_{jk}=\mathcal{T}_k\cap f^{-1}(\mathcal{T}_j),\quad \mathcal{A}_{jk}=\mathcal{T}_j\cap f(\mathcal{T}_k)
\end{equation*}
be the \textit{departure} and \textit{arrival} sets (note $f(\mathcal{D}_{jk})=\mathcal{A}_{jk}$). We take (disjoint) neighbourhoods $D_{jk}\subset \Sigma_k$ ($A_{jk}\subset \Sigma_j$, respectively) of $\mathcal{D}_{jk}$ ($\mathcal{A}_{jk}$, respectively) and extend $f$ to a local symplectomorphism 
\begin{equation*}
    F_{jk} : D_{jk}\to A_{jk}.
\end{equation*}
We may assume $D_{jk}$ and $A_{jk}$ are mutually disjoint and denote
\begin{equation*}
    D_k=\bigcup\limits_{j} D_{jk},\quad A_{j}=\bigcup\limits_{k}A_{jk}.
\end{equation*}
As in \cite{nsz11}, we may choose $\td{\Sigma}_j\subset T^*\RR^{d}$ and smooth symplectomorphisms $\kappa_j: \td{\Sigma}_j\to \Sigma_j$ up to the boundary (by taking $\Sigma_j$ with small diameter). We then define $\td{\mathcal{T}}_j$, $\td{D}_{jk}$, $\td{A}_{jk}$ and $\td{F}_{jk}$ accordingly using $\kappa_j$'s. Now the dynamics is encoded by the monodromy map $F=(F_{jk})$. One quantity that will be useful later is the minimal propagation time
\begin{equation}\label{e:minimal-prop-time}
    t_0:=\min\{t>0:\text{ there exist } j\neq k \text{ and } (x,\xi)\in \Sigma_k \text{ such that } \varphi^t(x,\xi)\in\Sigma_j \}>0.
\end{equation}

\subsubsection{Microlocal Grushin problem}
We now recall the microlocal Grushin problem constructed in \cite[\S 4]{nsz11}. Let $H(V)$ be the space of functions microlocalized in $V$. We will always assume $z\in [-h,h]+i[-C_0h,Ch]$. 

\begin{lem}\label{l:grushin-micro} There exist neighbourhoods $V_0\Subset V_1\Subset V_2\Subset X$ of $K_0$, and semiclassical Fourier integral operators $\td{R}_-^j:L^2(\RR^{d})\to H(V_2)$, $\td{R}_+^j:L^2(X)\to H(\td{\Sigma}_j)$ such that for any $v$ microlocalized in $V_1$, and any $v_+^k$ microlocalized in $\td{D}_k$, we can find $u$ microlocalized in $V_2$, and $u_-^k$ microlocalized   in $\td{D}_k\cup \td{A}_k$, so that $(u,u_-)$ solves
\begin{equation*}
    \begin{pmatrix}
        \frac{i}{h}P(z) &\td{R}_-\\
        \td{R}_+ &0
    \end{pmatrix}\begin{pmatrix}
        u\\
        \, u_-
    \end{pmatrix}=\begin{pmatrix}
        v\\
        \, v_+
    \end{pmatrix}
\end{equation*}
microlocally inside $V_1\times (\bigtimes_k\td{D}_k)$.
\end{lem}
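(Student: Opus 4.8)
The construction is that of \cite[\S 4]{nsz11}, so the only point that needs attention is the verification that Vasy's operator $P(z)$ fits the template of that construction near the trapped set. The first step is to observe that $K_0$ is a compact subset of $T^*X\setminus 0$ --- being the maximal invariant set of the geodesic flow consisting of trajectories bounded in both time directions --- and that its base projection avoids the collar of $\partial X$ on which the conjugation \eqref{e:Pz-definition} is performed. Hence, near $K_0$ the operator $P(z)$ coincides with $h^2(-\Delta-d^2/4)-(1+z)^2=P_0-(2z+z^2)$, where $P_0$ is a self-adjoint semiclassical differential operator with principal symbol $p=|\xi|^2_g-1$. Thus $p$ is real, $p^{-1}(0)$ is the unit cosphere bundle --- a smooth hypersurface on which $dp\neq 0$ --- and $H_p$ generates a reparametrization of the geodesic flow whose maximal compact invariant set in $p^{-1}(0)$ is exactly $K_0$. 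Since $z\in[-h,h]+i[-C_0h,Ch]$ we have $2z/h=\Ocal(1)$, which plays the role of the rescaled energy in \cite{nsz11}, while $z^2/h=\Ocal(h)$ is negligible. So, microlocalized near $K_0$, the operator $\tfrac{i}{h}P(z)$ meets all the hypotheses under which the microlocal Grushin problem is built in \cite[\S 4]{nsz11}.

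The second step is to recall the shape of that construction. Using the transversal hypersurfaces $\Sigma_j$ produced above (which exist precisely because $K_0$ has topological dimension $1$, via \cite{bowen1972expansive}) and the symplectic charts $\kappa_j:\td\Sigma_j\to\Sigma_j$, one works on each segment of a trajectory running from a transversal $\Sigma_k$ to its successor $\Sigma_j$. On such a segment the Hamilton flow of $p$ furnishes a local symplectomorphism straightening the flow, with $p$ becoming the first momentum coordinate $\xi_1$ and $\Sigma_j$ becoming $\{x_1=0\}$; quantizing it by a semiclassical Fourier integral operator (Egorov's theorem), and conjugating by a suitable elliptic operator that normalizes the subprincipal symbol and absorbs the $z/h$-term, one reduces $\tfrac{i}{h}P(z)$ microlocally to $\partial_{x_1}$ on a slab containing $\{x_1=0\}$. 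For this model the Grushin problem is solved by an elementary transport argument in $x_1$: $u$ is obtained by integrating the forcing $v$ in $x_1$ with the value on $\{x_1=0\}$ prescribed by $v_+$, and $u_-$ records the resulting transversal data. Transporting back by the FIOs and $\kappa_j$ defines $\td R_+^j$ (transport of $u$ from the flowout of $V_1$ to $\Sigma_j$, then $\kappa_j^{-1}$ and restriction to $\td\Sigma_j$) and $\td R_-^j$ (the operator injecting a microlocalized source at $\Sigma_j$); patching the segments with a microlocal partition of unity subordinate to the $\Sigma_j$ and the intervening flow pieces yields the operators and the solvability in the statement, together with uniform bounds on the solution operators.

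The step I expect to be the main obstacle is the bookkeeping of microlocalizations: one must guarantee that the $u$ thus constructed is microlocalized in the prescribed neighbourhood $V_2$ of $K_0$, that each $u_-^k$ is microlocalized in $\td D_k\cup\td A_k$, and that every off-diagonal and error term is $\Ocal(h^\infty)$ microlocally in $V_1\times(\bigtimes_k\td D_k)$. This is where the geometry enters: taking the $\Sigma_j$ of small diameter and close to $K_0$, together with the positive minimal propagation time $t_0>0$ of \eqref{e:minimal-prop-time}, ensures that the flow emanating from $V_1$ (and from data microlocalized in $\td D_k$) stays inside $V_2$ over the bounded times involved, and that the canonical relations of the FIOs compose cleanly. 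Since all of this is carried out in detail in \cite[\S 4]{nsz11} using only the microlocal structure of the operator near $K_0$ --- which, by the first step, is identical to ours --- the proof reduces to this verification together with a direct appeal to \cite{nsz11}.
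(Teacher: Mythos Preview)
Your proposal is correct and matches the paper's approach: the paper does not prove this lemma but simply recalls it from \cite[\S 4]{nsz11}, and you do the same while additionally spelling out why $P(z)$ near $K_0$ coincides with the unconjugated operator and hence fits the hypotheses of \cite{nsz11}. That verification is correct and is the only substantive point to check; the rest is a faithful sketch of the cited construction.
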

A more precise description of $\td{R}_\pm$ is given as follows.
\begin{itemize}
    \item $\WF_h'(\td{R}_+^j)\subset
    \td{\Sigma}_j\times V_2$ and $\WF_h'(\td{R}_-^j)\subset V_2\times \td{\Sigma}_j$.
    \item For any prefixed $\epsilon>0$, we can require
    \begin{equation}\label{e:wf-r}
\begin{aligned}
        \WF_h'(\td{R}_+^j)&\subset \{(x,\xi; \varphi^t(\kappa(x,\xi))): (x,\xi)\in \td{\Sigma}_j, |t|<\epsilon\},\\
    \WF_h'(\td{R}_-^j)&\subset \{(\varphi^t(\kappa(x,\xi)) ;x,\xi) : (x,\xi)\in \td{\Sigma}_j , t>-\epsilon\}.
\end{aligned}
\end{equation}
\end{itemize}
The microlocal Grushin problem is solved microlocally by
\begin{equation*}
    u=\td{E}v+\td{E}_+v_+,\quad u_-=\td{E}_-v+\td{E}_{-+}v_+
\end{equation*}
where $\td{E}, \td{E}_\pm$ and $\td{E}_{-+}$ are compactly supported operators with compact wavefront sets in $T^*(X\times X)$ and the following almost forward propagation properties:
\begin{equation}\label{e:wf-e}
    \begin{aligned}
        \WF_h'(\td{E})&\subset \{(\varphi^t(x,\xi) ;x,\xi) : t>-\epsilon\},\\
        \WF_h'(\td{E}_-^j)&\subset \{(x,\xi; \varphi^t(\kappa_j(x,\xi))) : (x,\xi)\in \td{\Sigma}_j , t<\epsilon\},\\
        \WF_h'(\td{E}_+^j)&\subset \{(\varphi^t(\kappa_j(x,\xi)) ;x,\xi) : (x,\xi)\in \td{\Sigma}_j , t>-\epsilon\},\\
        \td{E}_{-+}^{jk}&=\Mcal_{jk}-\delta_{jk}, 
    \end{aligned}
\end{equation}
where $\Mcal_{jk}$ is a Fourier integral operator quantizing $\td{F}_{jk}$.

\subsubsection{Global Grushin problem}
The next step is to construct a global Grushin problem. Let $V^j\Subset \kappa_j^{-1}(V_1\cap\Sigma_j)$ be small neighbourhoods of $\td{\mathcal{T}}_j$ in $\td{\Sigma}_j$, $Q_0^j$ be the (self-adjoint) quantization of a cutoff which is positive in $V^j$ and negative outside $\overline{V}^j$, and $\Pi_j$ be the orthogonal projection defined by $\mathbf{1}_{>0}(Q_0^j)$. Denote $V=\cup_j V^j$. Consider the following Grushin problem.
\begin{equation}\label{e:grushin-global}
    \Pcal(z)
    \begin{pmatrix}
        u\\
        \, u_-
    \end{pmatrix}:=\begin{pmatrix}
        \frac{i}{h}P(z) &R_-\\
        R_+ &0
    \end{pmatrix}\begin{pmatrix}
        u\\
        \, u_-
    \end{pmatrix}=\begin{pmatrix}
        v\\
        \, v_+
    \end{pmatrix}:D^s_h(\Xcal)\times \Hcal_h\to \bar{H}^{s-1}_h(\Xcal)\times \Hcal_h.
\end{equation}
where 
\begin{equation}
    \Hcal_h=\bigtimes_j \Hcal_h^j, \quad \Hcal_h^j= \Pi_j H(\td{D}_j),\quad R_-^j=\td{R}_-^j\Pi_j,\quad R_+^j=\Pi_j\td{R}_+^j.
\end{equation}
Note $\Hcal_h$ is a finite dimensional space with $\dim \Hcal_h\sim h^{-d}$. Recall $\WF_h(Q)\subset V_0$ and $\WF_h(\Pi_j)\subset\overline{V}$.
We will choose $V_0$ and $V$ to be sufficiently small neighbourhoods in the following to conclude the well-posedness of the Grushin problem.
\begin{lem}\label{l:grushin-global}
    There exists 
    \begin{equation}\label{e:grushin-global-Ecal}
        \mathcal{E}(z)=\begin{pmatrix}
        E& E_+\\
        E_-&E_{-+}
    \end{pmatrix}=\Ocal(1):\bar{H}^{s-1}_h(\Xcal)\times \Hcal_h\to D^s_h(\Xcal)\times \Hcal_h
    \end{equation}
    solving the Grushin problem \eqref{e:grushin-global}, i.e. $\Pcal(z)\Ecal(z)=I$.
\end{lem}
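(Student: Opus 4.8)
The plan is to invert $\Pcal(z)$ by first building an approximate inverse supported on a finite cover by microlocal regions and then correcting by a Neumann series. I would proceed in three stages. \emph{Stage 1: microlocal solution near $K_0$.} Feed the data $(v,v_+)$, after microlocalizing to $V_1$ and $\bigtimes_k\td D_k$ with a cutoff $\chi_0\in\Psi_h^{\rm comp}$, into the microlocal Grushin problem of Lemma~\ref{l:grushin-micro}, obtaining $u^{(0)}=\td E(\chi_0 v)+\td E_+v_+$ and $u_-=\td E_-(\chi_0 v)+\td E_{-+}v_+$; by \eqref{e:wf-e} these have controlled wavefront sets along short forward flowouts of $V_2$ and of the $\td\Sigma_j$'s, and by \eqref{e:wf-r} the error $\frac{i}{h}P(z)u^{(0)}+R_-u_- - v$ is microlocally $\Ocal(h^\infty)$ inside $V_1$, while $R_+u^{(0)} - v_+$ is $\Ocal(h^\infty)$ inside $\bigtimes_k\td D_k$. \emph{Stage 2: solving away from $K_0$ by forward propagation.} The residual source $f:=v-\frac{i}{h}P(z)u^{(0)}-R_-u_-$ is, after the microlocal correction, supported microlocally away from $\Gamma_-$ (this is where topological dimension $1$ of $K_0$ and the choice of the $\Sigma_j$ enter: the flowout of $\WF_h(f)$ escapes to $L_-$ without returning through a small neighbourhood of $K_0$, because any trapped trajectory must cross some $\Sigma_j$, where Stage 1 already absorbed the mass). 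Apply Proposition~\ref{p:outgoing} to solve $P(z)u^{(1)}=\frac{h}{i}f+\Ocal(h^\infty)$ with $\|u^{(1)}\|_{\bar H^s_h}\le Ch^{-1}\|f\|_{\bar H^{s-1}_h}$ and $\WF_h(u^{(1)})$ contained in the forward flowout of $\WF_h(f)$; set $u=u^{(0)}+u^{(1)}$. \emph{Stage 3: matching the $R_\pm$, $\Pi_j$ components and closing a Neumann series.} One checks that adding $u^{(1)}$ perturbs $R_+u$ only by $\Ocal(h^\infty)$ (since $\WF_h(u^{(1)})$ misses $V_2\supset\WF_h'(R_+^j)$, by the disjointness of the $\td D_{jk}$, $\td A_{jk}$ and the escape property) and that the new residuals in both components are $\Ocal_{\bar H^{s-1}_h\times\Hcal_h}(h^\infty)$ times the data. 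Writing the resulting approximate inverse as $\Ecal^{(0)}(z)$, we have $\Pcal(z)\Ecal^{(0)}(z)=I+\Ocal(h^\infty)$, which is invertible for $h$ small, so $\Ecal(z):=\Ecal^{(0)}(z)(I+\Ocal(h^\infty))^{-1}$ does the job; holomorphy in $z$ is inherited from holomorphy of $P(z)$, $\td E$, $\td E_\pm$, $\td E_{-+}$ and the forward solution operator.

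For the norm bound $\Ecal(z)=\Ocal(1):\bar H^{s-1}_h\times\Hcal_h\to D^s_h\times\Hcal_h$ one traces constants through the above: the $h^{-1}$ from Proposition~\ref{p:outgoing} and from the microlocal parametrix is exactly compensated by the factor $\frac{i}{h}P(z)$ in the top-left block of $\Pcal(z)$, so that $E=\Ocal(h^{?})$ with the bookkeeping arranged so that the composite map is bounded uniformly; the blocks $E_+,E_-$ (which involve $\td E_\pm$ acting between $L^2$-type spaces and $\Hcal_h$) and $E_{-+}$ (a bounded combination of the quantized monodromy FIOs $\Mcal_{jk}$) are $\Ocal(1)$ directly from Lemma~\ref{l:grushin-micro}. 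The radial points $L_\pm$ are handled by the regularity threshold $s>C_0+1/2$ exactly as in Proposition~\ref{p:propagation}.

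The main obstacle I expect is Stage 2's \textbf{escape/separation argument}: showing that after the microlocal correction near $K_0$ the remaining source really has wavefront set disjoint from $\Gamma_-$ (so Proposition~\ref{p:outgoing} applies), and simultaneously that the forward flowout of that source does not re-enter the small neighbourhood $V_0\supset\WF_h(Q)$ nor the region $\overline V\supset\WF_h(\Pi_j)$ in a way that would recontaminate the $R_+$ and $\Pi_j u_-$ components. This is a purely dynamical statement that hinges on the one-dimensionality of $K_0$ through the existence of the transversal hypersurfaces $\Sigma_j$ with the successor property in \S2.3.1, together with the short-time ($|t|<\epsilon$) localization in \eqref{e:wf-r}--\eqref{e:wf-e}. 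Once this geometric picture is pinned down, the rest is a standard parametrix-plus-Neumann-series argument with the estimates supplied by Propositions~\ref{p:propagation}, \ref{p:outgoing} and Lemma~\ref{l:grushin-micro}.
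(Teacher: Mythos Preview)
Your plan has the right architecture, but there are two genuine gaps.

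The serious one is in Stage~2. Your residual $f=v-\tfrac{i}{h}P(z)u^{(0)}-R_-u_-$ contains, outside $V_1$, essentially the uncut data: the microlocal Grushin only solves the equation \emph{inside} $V_1$, and $u^{(0)}$, $R_-u_-$ are microlocalized near $V_2$, so $\WF_h(f)\supset\WF_h(v)\setminus V_2$. Since $v\in\bar H_h^{s-1}(\Xcal)$ is arbitrary, $\WF_h(v)$ can meet $\Gamma_-$ anywhere outside $V_2$, and then Proposition~\ref{p:outgoing} simply does not apply. Your justification (``any trapped trajectory must cross some $\Sigma_j$, where Stage~1 already absorbed the mass'') addresses the wrong set: the obstruction is mass on $\Gamma_-\setminus K_0$, i.e.\ on orbits that are \emph{not yet} near the trapped set but will flow into it; nothing about the $\Sigma_j$'s or the one-dimensionality of $K_0$ prevents $v$ from living there. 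The paper resolves this by a different first move: set $u_0=\tfrac{h}{i}(P(z)-ihQ)^{-1}v$, so that $\tfrac{i}{h}P(z)u_0=v+v_0$ with $v_0=ihQ(P(z)-ihQ)^{-1}v$ microlocalized in $\WF_h(Q)\subset V_0$. Only this already-localized $v_0$ is fed into the microlocal Grushin; the resulting error $f$ then lies in $V_2\setminus V_1$ and is a short forward flowout from $V_0\cup\kappa(V)$, so $\WF_h(f)\cap\Gamma_-=\varnothing$ by the escape observation~\eqref{e:incoming-avoid}. Without this global step (or something equivalent) your Stage~2 cannot start.

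The second gap is the claim in Stage~3 that the residuals are $\Ocal(h^\infty)$. Since $u_-\in\Hcal_h$ must be $\Pi\td u_-$ while the microlocal Grushin produces $\td R_-\td u_-$, the first row picks up the term $\td R_-(I-\Pi)\td u_-$, which is $\Ocal(1)$, not $\Ocal(h^\infty)$: the operator $I-\Pi$ is microlocally the identity outside $V$, and $\td u_-$ has mass there. Thus $\Pcal(z)\Ecal^{(0)}(z)=I-\Rcal$ with $\Rcal=\Ocal(1)$, and a Neumann series does not close directly. The paper's Step~3 shows instead that $\Rcal$ is \emph{nilpotent modulo} $\Ocal(h^\infty)$: each factor of $\Rcal$ forces forward propagation by at least the minimal return time $t_0$ in \eqref{e:minimal-prop-time} while keeping the output in the range of $I-\Pi$ (hence off a neighbourhood of $\mathcal T$), so after finitely many iterates the wavefront set leaves $\WF_h(\td R_-)$ and $\Rcal^{N_1}=\Ocal(h^\infty)$. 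The inverse is then $\Ecal=\Ecal^{(1)}(I+\Rcal+\cdots+\Rcal^{N_1-1})(I+\Ocal(h^\infty))^{-1}$.
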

\begin{proof}[Proof of Lemma~\ref{l:grushin-global}]
Since $\Pcal(z)$ is a Fredholm operator of index $0$, it suffices to construct a right inverse of $\Pcal(z)$. In other words, given $(v,v_+)\in \bar{H}_h^{s-1}(\Xcal)\times \Hcal_h$ with $\|v\|_{\bar{H}_h^{s-1}}^2+\|v_+\|_{L^2}^2\leq 1$, we want to find $(u,u_-)\in D_h^s(\Xcal)\times \Hcal_h$ such that 
\begin{equation*}
    \Pcal(z)\begin{pmatrix}
        u\\
       \, u_-
    \end{pmatrix}=\begin{pmatrix}
        v\\
       \, v_+
    \end{pmatrix},\quad \|u\|_{\bar{H}_h^{s}}^2+\|u_-\|_{L^2}^2=\Ocal(1).
\end{equation*}

\noindent\textbf{Step 1: Microlocalize to the trapped set.} 

In order to apply the microlocal Grushin problem in Lemma~\ref{l:grushin-micro}, we need to first microlocalize to a neighbourhood of the trapped set. Thus we take $$u_0=\frac{h}{i}(P(z)-ihQ)^{-1}v$$ so that
\begin{equation}\label{e:forward-solve-hQ}
    \frac{i}{h}P(z)u_0=P(z)(P(z)-ihQ)^{-1}v=v+ihQ(P(z)-ihQ)^{-1}v.
\end{equation}
Let 
\begin{equation*}
    v_0=-ihQ(P(z)-ihQ)^{-1}v,\quad v_{0+}=R_+u_0,
\end{equation*}
we aime to solve for $\Pcal(z)(u,u_-)^T=(v_0,v_+ - v_{0+})^T$.
At this point we can use the microlocal Grushin problem to find $(\td{u}, \td{u}_-)\in H(V_2)\times H(\td{D}\cup \td{A})$ such that 
\begin{equation*}
    \frac{i}{h}P(z)\td{u}+\td{R}_-\td{u}_-=v_0 - f,\quad R_+\td{u}=v_+-v_{0+}+\Ocal_{L^2}(h^\infty)
\end{equation*}
where $\WF_h(f)\subset V_2\setminus V_1$. 

\noindent
\textbf{Step 2: Correct  the error $f$ using forward propagation.} 

We want to correct the error $f$ without affecting the projection $R_+$. For this we use the forward propagation property in Proposition~\ref{p:outgoing} and the following important observation (see \cite[Lemma 1.4]{open}):
\begin{equation}\label{e:incoming-avoid}
\begin{aligned}
    &\text{Suppose } U \text{ is a neighbourhood of the trapped set } K_0. \\
    &\text{Then there exists a smaller }
    \text{neighbourhood } U_0\subset U \text{ such that}\\ 
    &\text{if }(x,\xi)\in U_0 \text{ and } 
    \varphi^t(x,\xi)\notin U  \text{ for some } t>0,
    \text{ then } (x,\xi)\notin\Gamma_-.
\end{aligned}
\end{equation}

Note \eqref{e:wf-r} and \eqref{e:wf-e} imply that any $(x,\xi)\in \WF_h(f)$ is of the form $\varphi^t(y,\eta)$ for some $(y,\eta)\in V_0\cup \kappa(V)$ and $t>-2\epsilon$. By choosing $V_0$ and $V$ sufficiently small, \eqref{e:incoming-avoid} implies $\WF_h(f)\cap\Gamma_-=\varnothing$. Consequently, by Proposition~\ref{p:outgoing}, there exists $u_1\in D_h^s(\Xcal)$ such that
\begin{equation*}
    \frac{i}{h}P(z) u_1= f+\Ocal_{\bar{H}_h^{s-1}}(h^\infty),\quad \WF_h(u_1)\cap \overline{T}^*X\subset \bigcup\limits_{t=0}^\infty \varphi^t(\WF_h(f)).
\end{equation*}
We then define $\Ecal^{(1)}$ by
\begin{equation*}
    \begin{pmatrix}
        u^{(1)}\\
        u_-^{(1)}
    \end{pmatrix}:=\begin{pmatrix}
        \td{u}+u_1+u_0\\
        \Pi\td{u}_-
    \end{pmatrix}=\begin{pmatrix}
        E^{(1)}&E_+^{(1)}\\
        E_-^{(1)}&E_{-+}^{(1)}
    \end{pmatrix}\begin{pmatrix}
        v\\
       \, v_+
    \end{pmatrix}.
\end{equation*}
Since $u_1$ is the forward solution and $V_0, V$ are taken sufficiently small, we may also assume $R_+u_1=\Ocal_{L^2}(h^\infty)$. Thus
\begin{equation*}
    \frac{i}{h}P(z)u^{(1)}+R_-u_-^{(1)}=v+\td{R}_-(\Pi-I)\td{u}_-+\Ocal_{\bar{H}_h^{s-1}}(h^\infty), \quad R_+ u^{(1)} = v_+ +\Ocal_{L^2}(h^\infty).
\end{equation*}
In other words
 \begin{equation*}
     \Pcal\Ecal^{(1)}=I-\Rcal,\quad \Rcal\begin{pmatrix}
        v\\
        \, v_+
    \end{pmatrix}=\begin{pmatrix}
        \td{R}_-(I-\Pi)\td{E}_-&\td{R}_-(I-\Pi)\Mcal\\
         0&0
     \end{pmatrix}\begin{pmatrix}
        v_0\\
        v_+-v_{0+}
    \end{pmatrix}+\Ocal_{\bar{H}_h^{s-1}\times L^2}(h^\infty).
 \end{equation*}

\noindent
\textbf{Step 3: Correct $\Rcal$ by iteration.}

Finally we need to remove the error $\Rcal$. This is done by showing $\Rcal$ is nilpotent modulo $\Ocal(h^\infty)$. First we note $\Mcal$ has a minimal propagation time $t_0>0$ defined in \eqref{e:minimal-prop-time} which gives
 \begin{equation*}
     \WF_h'(\Mcal)\subset \{(\varphi^t(x,\xi), x,\xi): t
     \geq t_0\}.
 \end{equation*}
Moreover, since $\WF_h'(\td{E}_-Q)\subset V_2\times V_0$, for $V_0$ sufficiently small we have
 \begin{equation*}
     \WF_h'((I-\Pi)\td{E}_-Q)\subset \{(\varphi^t(x,\xi), x,\xi): t
     \geq t_0\}.
 \end{equation*}
So we conclude
\begin{equation*}
    \WF_h'(\Rcal)\subset \{(\varphi^t(x,\xi), x,\xi): t
     \geq t_0-2\epsilon\}.
\end{equation*}
Due to the projection $I-\Pi$, the wavefront set of $\Rcal^N$ does not intersect $\Gamma_-$ for $N\geq N_0$:
\begin{equation*}
     \WF_h'(\Rcal^N)\subset \{(\varphi^t(x,\xi), x,\xi): t
     \geq N(t_0-2\epsilon), (x,\xi)\notin\Gamma_-\},\quad N\geq N_0.
\end{equation*}
 Eventually $\WF_h(\Rcal^{N}(v,v_+)^T)\cap\WF_h(\td{R}_-)=\varnothing$ and thus there exists $N_1\in \NN$ with $\Rcal^{N_1}=\mathcal{O}(h^\infty)$. Let
 \begin{equation*}
     \Ecal^{(2)}:= \Ecal^{(1)}(I+\Rcal+\cdots+\Rcal^{N_1-1}),
 \end{equation*}
then $\Pcal\Ecal^{(2)}=I+\Ocal(h^\infty)$.
So we finally conclude the inverse
\begin{equation}\label{e:grushin-global-Ez}
    \Ecal(z)=\Ecal^{(2)}(I+\Ocal(h^\infty))^{-1}=\Ecal^{(1)}(I+\Rcal+\cdots+\Rcal^{N_1-1})(I+\Ocal(h^\infty)).
\end{equation}
One checks that each step is uniformly bounded in $h$. This finishes the proof of Lemma~\ref{l:grushin-global}.
\end{proof}
\begin{proof}[Proof of Proposition~\ref{p:quant-mono}]

Let $E_{-+}$ be the matrix component defined in \eqref{e:grushin-global-Ecal}.
We define the matrices $M(z,h):=I+E_{-+}(z)$ and the statement follows from the Grushin problem \eqref{e:grushin-global}.
\end{proof} 
We remark that 
$M(z,h)$ has the form $M(z,h)=\Pi\Mcal(z,h)\Pi+\Rcal_1$
where $\Rcal_1$ again satisfies
\begin{equation*}
    \WF_h'(\Rcal_1^N)\subset \{(\varphi^t(x,\xi), x,\xi): t
     \geq N(t_0-2\epsilon), (x,\xi )\notin 
     \Gamma_-\}, \quad N\geq N_0
\end{equation*}
which implies $\Rcal_1^{N_1}=\Ocal(h^\infty)$. Moreover, a direct computation shows that $\Rcal_1$ has the form $\Rcal_1=A(I-\Pi)\Mcal(z,h)\Pi+\Ocal(h^\infty)$ where $A$ satisfies the forward propagation property
\begin{equation*}
    \WF_h'(A)\subset \{(\varphi^t(x,\xi), x,\xi): t
     \geq 0\}.
\end{equation*}

\subsection{Structure of the trapped set}\label{s:structure}
In this section, we verify the dynamical assumption in Proposition~\ref{p:quant-mono}: $K_0$ is topologically one dimensional. From now on we assume $X$ is $2$-dimensional.

\begin{prop}
    Suppose $X$ is a negatively curved (even) asymptotically hyperbolic surface, then the trapped set $K_0$ has topological dimension $1$.
\end{prop}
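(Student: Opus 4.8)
The plan is to invoke Eberlein's work on geodesic flows of manifolds without conjugate points, together with the structure of the trapped set for asymptotically hyperbolic surfaces. First I would recall that for an even asymptotically hyperbolic surface with negative curvature, the trapped set $K_0 = \Gamma_+ \cap \Gamma_-$ lives inside the unit cosphere bundle $S^*X_0$ over a compact ``convex core''-like region $X_0 \subset X$; indeed, negatively curved asymptotically hyperbolic manifolds behave near infinity like real hyperbolic space, and geodesics that do not escape to either end must stay in a fixed compact set. The key dynamical input is that the geodesic flow on such a surface has no conjugate points (negative curvature implies this) and is of \emph{Anosov type} on the trapped set, or at least has the property that the stable and unstable distributions $\Gamma_\pm$ are codimension-one, continuous, and transverse along $K_0$.

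Next I would set up the dimension count. Since $\dim X = 2$, the cosphere bundle $S^*X$ is three-dimensional; the flow direction $\mathbb{R} H_p$ is one-dimensional, and $\Gamma_+$, $\Gamma_-$ are each two-dimensional (one flow direction plus one transverse stable/unstable direction). Locally near a point of $K_0$, using a Poincaré section $\Sigma$ transverse to the flow, the reduced trapped set $\mathcal{T} = K_0 \cap \Sigma$ sits inside a two-dimensional symplectic surface, and it is the intersection of the reduced stable and unstable laminations, each of which is locally a one-dimensional $C^1$ curve (this is where Eberlein's regularity of the horospherical foliations enters, cf.\ \cite{eberlein1972geodesic}). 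The intersection of two one-dimensional laminations in a surface is totally disconnected in the transverse directions, so $\mathcal{T}$ is locally contained in a product of two Cantor-like sets, hence has topological dimension at most $1$; and since $K_0$ contains at least one full geodesic (the flow direction), it has topological dimension exactly $1$.

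The main obstacle I expect is not the dimension count itself but verifying the hypotheses that make Eberlein's theory applicable in the asymptotically hyperbolic (rather than compact or convex cocompact) setting: namely that the nonwandering/trapped set is compact, that the flow restricted to it is uniformly hyperbolic (or at least that $\Gamma_\pm$ have the needed transversality and regularity), and that there are no ``degenerate'' trapped geodesics escaping the compact part. I would handle compactness by the standard argument that a trapped geodesic cannot enter the asymptotically hyperbolic funnels (where the metric is convex-cocompact-like and all geodesics escape), so $K_0$ projects into a compact subset of $X$; then negative curvature on this compact set gives uniform hyperbolicity of the flow on $K_0$ by the usual Anosov-closing / cone-field arguments. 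With uniform hyperbolicity in hand, the stable/unstable manifolds through points of $K_0$ are $C^1$ immersed curves depending continuously on the base point (Eberlein, or standard hyperbolic dynamics), and the dimension argument of the previous paragraph concludes. A secondary technical point is to phrase everything in terms of the flow $\varphi^t = \exp(t\langle\xi\rangle^{-1}H_p)$ on $p^{-1}(0) \cap T^*X$ as used elsewhere in the paper, rather than the geometric geodesic flow; this is a routine translation since $p^{-1}(0)$ is (a rescaling of) the unit cosphere bundle.
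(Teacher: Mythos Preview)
There is a genuine gap in your dimension count. You assert that in a Poincar\'e section $\Sigma$ the reduced stable and unstable laminations are ``each locally a one-dimensional $C^1$ curve,'' and that ``the intersection of two one-dimensional laminations in a surface is totally disconnected in the transverse directions.'' Neither statement is correct as written. A lamination whose \emph{leaves} are one-dimensional curves can nonetheless fill an open subset of $\Sigma$ (think of two transverse foliations of a disk: their intersection is the whole disk, not a Cantor set). What you need is that the \emph{transversals} to the laminations $\Gamma_\pm\cap\Sigma$ are totally disconnected, and nothing in your outline establishes this. Indeed, your argument never uses the existence of the funnels for the dimension step---you only invoke non-compactness to get compactness of $K_0$---so as stated it would ``prove'' that the trapped set of a closed negatively curved surface has topological dimension $1$, whereas in that case $K_0=S^*X$ is three-dimensional. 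The regularity of horospherical foliations that you attribute to Eberlein is not the relevant input.

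The paper's proof is global rather than local: it passes to the universal cover $\widetilde X$, identifies (via the Hopf parametrisation) the lift of $K_0$ with $((\Lambda_\Gamma\times\Lambda_\Gamma)\setminus\Delta)\times\RR$, and then reduces the problem to showing the limit set $\Lambda_\Gamma\subset\partial_\infty\widetilde X\simeq S^1$ is totally disconnected. Here the crucial input from \cite{eberlein1972geodesic} is a \emph{dichotomy} (Theorem~2.5 there): the limit set is either nowhere dense in $\partial_\infty\widetilde X$ or equal to all of it. The asymptotically hyperbolic ends of $X$ lift to open arcs in $\partial_\infty\widetilde X$ disjoint from $\Lambda_\Gamma$, ruling out the second alternative; a closed nowhere dense subset of a circle is totally disconnected, hence zero-dimensional, and $\dim K_0=1$ follows. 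If you want to salvage your local approach, the missing step is precisely to show $\Gamma_\pm\cap\Sigma$ are \emph{proper} laminations (not full foliations), and the cleanest route to that is again through the limit set.
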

\begin{proof}
The trapped set $K_0$ we defined before is the same as the trapped set defined by the geodesic flow $H_{\td{p}}$ on $T^*X\setminus 0$ with $\td{p}(x,\xi):=|\xi|^2-1$. So we can use knowledge of negative curved geometry.

Let $\widetilde{X}$ be the universal cover of $X$, then there is a natural compactification $\overline{\widetilde{X}}$, such that the boundary at infinity $\partial_{\infty}\widetilde{X}$ (which is topologically a circle) can be thought as equivalence classes of geodesic rays. The original manifold $X$ is then a quotient of the universal cover $\widetilde{X}$ by a discrete group of isometries $\Gamma$. The \textit{limit set} $\Lambda_\Gamma\subset\partial_{\infty}\td{X}$ is defined as the accumulation points of any orbit of $\Gamma$ in $\td{X}$. The lift $\td{K}_0$ of the trapped set $K_0$ to $S^*\td{X}$ is given by the convex hull of the limit set. Using the Hopf parametrisation, we know $\td{K}_0$ is homeomorphic to $((\Lambda_\Gamma\times \Lambda_\Gamma)\setminus \Delta)\times \RR$.

In order to show $K_0$ has topological dimension $1$, it suffices to show the limit set $\Lambda_{\Gamma}$ is totally disconnected. By \cite[Theorem 2.5]{eberlein1972geodesic}, the limit set is either nowhere dense or the full $\partial_\infty\td{X}$. But the hyperbolic ends of $X$ correspond to intervals in $\partial_\infty\td{X}$ that does not belong to the limit set $\Lambda_\Gamma$. So the second case does not happen, and $\Lambda_\Gamma$ has to be nowhere dense and hence totally disconnected.
\end{proof}

\section{Proof of the spectral gap and the resolvent estimate}
We conclude the proof of the main Theorem in this section.
Let $X$ be an (even) asymptotically hyperbolic surface with (strictly) negative curvature. Then (see \cite{Mazzeo87,guillarmou2005meromorphic,vasy2013a,vasy2013b,zworski2016vasymethod}) the resolvent
\begin{equation}
    R(s)=(-\Delta-1/4-\lambda^2)^{-1}:L^2_{\rm comp}(X)\to L^2_{\rm loc}(X)
\end{equation}
has a meromorphic continuation to $\lambda\in\CC$. After rescaling $\lambda=h^{-1}(1+z)$ we can apply Proposition~\ref{p:vasy} and study the poles of $P(z)^{-1}$ for $z\in [-h,h]+i[-C_0h,Ch]$.

We have reduced the problem to quantum monodromy maps in \S \ref{s:grushin}. In the case of surfaces, \cite[Proposition 4.1]{vacossin2022spectral} shows
\begin{prop}
    There exist $c_0, h_0>0$ and $\gamma>0$ such that for $0<h<h_0$, $c_0\log(1/h)\leq N\leq C\log(1/h)$, we have
    \begin{equation*}
        \|\Mcal(z,h)^N\|_{L^2(\RR)\to L^2(\RR)}\leq Ch^\gamma h^{C_1\min(0,h^{-1}\Im z)}.
    \end{equation*}
\end{prop}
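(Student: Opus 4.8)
The plan is to derive this iterated-monodromy norm bound by combining the microlocal structure of $\Mcal(z,h)$ with a fractal uncertainty principle (FUP) for the reduced trapped set, exactly as in \cite{vacossin2022spectral} but organized so that it plugs into the Grushin reduction of \S\ref{s:grushin}. First I would recall that $\Mcal(z,h) = (\Mcal_{jk})$ is a block matrix of Fourier integral operators quantizing the local symplectomorphisms $\td F_{jk}:\td D_{jk}\to\td A_{jk}$, with $\WF_h'(\Mcal_{jk})$ concentrated within $O(\epsilon)$ of the graph of $\td F_{jk}$; the composition $\Mcal^N$ is then (modulo $O(h^\infty)$) a sum over admissible words $w=(j_0,j_1,\dots,j_N)$ in the symbolic dynamics of $f$, each summand a quantized FIO along the corresponding orbit segment, whose amplitude carries a factor reflecting the (un)stable Jacobian of the flow over time $\sim N t_0$. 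Because the flow is Anosov on $K_0$ (negative curvature), those Jacobians are exponentially contracting/expanding, and for $z$ with $\Im z<0$ the conjugation in \eqref{e:Pz-definition} multiplies each word's amplitude by an extra factor bounded by $h^{C_1\min(0,h^{-1}\Im z)}$; isolating this factor and absorbing it into the final estimate is the source of the $h^{C_1\min(0,h^{-1}\Im z)}$ term in the claim.

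The core of the argument is the hyperbolic dispersive/FUP estimate. I would project $\Mcal^N$ back onto each model hypersurface $\td\Sigma_j\subset T^*\RR^d$ (here $d=1$, so $\td\Sigma_j$ is a line) using the symplectomorphisms $\kappa_j$, so that $\Mcal^N$ becomes, up to conjugation by elliptic FIOs, an operator on $L^2(\RR)$ built from compositions of $h$-FIOs along the open hyperbolic map $F$. The reduced trapped set $\mathcal T$ is totally disconnected (it is a subset of the product of limit sets, which we showed is nowhere dense), hence porous with some exponent $\nu>0$ uniform in scale; after $N\sim c_0\log(1/h)$ steps the relevant phase-space neighbourhoods have shrunk to scale $h^{\rho}$ for a fixed $\rho\in(0,1)$, and the support of the iterated symbol lies in an $h^\rho$-neighbourhood of $\mathcal T$, which is $h^\rho$-porous on scales $[h,h^\rho]$. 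The one-dimensional FUP of Bourgain--Dyatlov, in the form packaged in \cite[Proposition 4.1 and its proof]{vacossin2022spectral} (itself relying on \cite{dyatlov2022control}), then yields an $L^2\to L^2$ gain of $h^{\gamma_0}$ for some $\gamma_0>0$ depending only on the porosity exponent, for every admissible word, with constants uniform over the finitely many blocks. Summing over the (polynomially-in-$1/h$ many, since $N=O(\log(1/h))$) admissible words and tracking the hyperbolic Jacobian factors — which decay geometrically and thus contribute a further net power of $h$ — gives the stated $\|\Mcal(z,h)^N\|\le C h^\gamma h^{C_1\min(0,h^{-1}\Im z)}$ with $\gamma>0$, where the lower bound $N\ge c_0\log(1/h)$ is exactly what is needed to reach the scale where FUP is effective and the upper bound $N\le C\log(1/h)$ keeps the word count and the $O(h^\infty)$ errors under control.

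I expect the main obstacle to be the passage from the intrinsic open-hyperbolic-map formulation on $K_0$ to a clean enough model on $L^2(\RR)$ so that the Bourgain--Dyatlov FUP applies verbatim: one must verify that the iterated FIO $\Mcal^N$ really is, microlocally, a composition of the type covered by \cite{vacossin2022spectral} (i.e. that after quantization the relevant sets are the images/preimages of porous sets under the dynamics with uniformly controlled distortion), that the porosity exponent of $\mathcal T$ controls the porosity of all the finite-time neighbourhoods uniformly, and that the conjugation factor $h^{C_1\min(0,h^{-1}\Im z)}$ is extracted cleanly without spoiling the ellipticity of the model FIOs. Since \cite{vacossin2022spectral} establishes precisely this $L^2$ bound for quantum monodromy operators arising from open hyperbolic maps in dimension two, and since \S\ref{s:structure} shows our $K_0$ is one-dimensional (hence $\mathcal T$ is totally disconnected, with the limit-set porosity supplying the required fractal input), the proof reduces to checking that the monodromy operator $\Mcal(z,h)$ produced by our Grushin construction in Lemma~\ref{l:grushin-global} satisfies the structural hypotheses of \cite[Proposition 4.1]{vacossin2022spectral}; I would therefore organize the write-up as: (i) recall the structure of $\Mcal$ from \eqref{e:wf-e}, (ii) identify it with an open-hyperbolic-map quantum monodromy operator and record the effect of $\Im z$, (iii) invoke \cite[Proposition 4.1]{vacossin2022spectral} to conclude.
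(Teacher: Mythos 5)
Your proposal ultimately reduces to verifying that $\Mcal(z,h)$ fits the framework of \cite[Proposition 4.1]{vacossin2022spectral} and invoking that result, which is exactly what the paper does: the proposition is stated as a direct quotation of Vacossin's theorem, with no further proof supplied. Your extended sketch of the internals is a reasonable caricature of what happens inside that citation, though with minor imprecisions that would matter if you actually had to reprove it (porosity of $\mathcal{T}$ at all scales requires more than the limit set being nowhere dense, and the $h^\gamma$ gain cannot come from summing Jacobian factors over admissible words --- that is precisely the pressure-condition argument the FUP is designed to avoid); none of this extra material is needed for the paper's purposes.
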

From this we conclude 
\begin{prop}\label{p:power-M}
    There exist $c_1, h_0>0$ and $\gamma>0$ such that for $0<h<h_0$, $c_1\log(1/h)\leq N\leq C\log(1/h)$, we have
    \begin{equation*}
        \|M(z,h)^N\|_{L^2(\RR)\to L^2(\RR)}\leq Ch^\gamma h^{C_1\min(0,h^{-1}\Im z)}.
    \end{equation*}
\end{prop}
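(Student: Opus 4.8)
The plan is to transfer the norm bound on powers of the idealized monodromy operator $\Mcal(z,h)$ to the actual matrix $M(z,h)$, exploiting the structural decomposition recorded immediately after the proof of Proposition~\ref{p:quant-mono}. Recall that $M(z,h)=\Pi\Mcal(z,h)\Pi+\Rcal_1$, where $\Rcal_1=A(I-\Pi)\Mcal(z,h)\Pi+\Ocal(h^\infty)$ with $A$ satisfying the forward propagation property $\WF_h'(A)\subset\{(\varphi^t(x,\xi),x,\xi):t\geq 0\}$, and where $\Rcal_1$ is nilpotent modulo $\Ocal(h^\infty)$: $\Rcal_1^{N_1}=\Ocal(h^\infty)$ with $N_1$ independent of $h$ (indeed $N_1=\Ocal(1)$ since $N_1(t_0-2\epsilon)$ only needs to exceed a fixed escape time from a neighbourhood of the trapped set).

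First I would expand $M(z,h)^N=(\Pi\Mcal\Pi+\Rcal_1)^N$ into a sum of $2^N$ words in the two letters $\Pi\Mcal\Pi$ and $\Rcal_1$. Because $\Rcal_1^{N_1}=\Ocal(h^\infty)$, any word containing $N_1$ consecutive $\Rcal_1$-factors is negligible; so up to $\Ocal(h^\infty)$ we may restrict to words in which runs of $\Rcal_1$ have length $<N_1$. Each such word is then a product of at most $N_1$ blocks, each block of the form $\Rcal_1^{a}(\Pi\Mcal\Pi)^{b}$ with $0\le a<N_1$; grouping consecutive $\Pi\Mcal\Pi$ factors and using $\Pi^2=\Pi$, the word becomes a product of the form $\Rcal_1^{a_0}\Pi\Mcal^{b_1}\Pi\Rcal_1^{a_1}\Pi\Mcal^{b_2}\Pi\cdots$ with $a_i<N_1$ and $\sum_i (a_i+b_i)=N$ (the $a_i$'s consuming at most $N_1^2=\Ocal(1)$ of the total count). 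Hence at least one of the $b_i$ is $\geq (N-N_1^2)/N_1\geq c_1'\log(1/h)$ for a suitable $c_1'$, so one genuine large power $\Mcal^{b_i}$ with $b_i$ in the admissible range $[c_0\log(1/h),C\log(1/h)]$ appears in every non-negligible word (if $b_i$ overshoots $C\log(1/h)$, write $\Mcal^{b_i}=\Mcal^{b_i-m}\Mcal^{m}$ with $m$ in range and absorb the remaining factor into the uniform bound — or simply choose $c_1$ large enough that $b_i$ never needs to exceed $C\log(1/h)$). Applying the previous Proposition to that block gives the factor $Ch^\gamma h^{C_1\min(0,h^{-1}\Im z)}$, while all remaining factors $\Pi$, $\Mcal^{b_j}$, $\Rcal_1^{a_j}$ are bounded uniformly in $h$ (here I would use that $\|\Mcal\|=\Ocal(1)$ so $\|\Mcal^{b_j}\|=\Ocal(1)^{b_j}$, which is why one must be careful to control the number and lengths of these leftover blocks — this is the one point needing attention).

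The main obstacle is precisely this bookkeeping: naively, a word could contain many short powers $\Mcal^{b_j}$ whose norms each contribute an $\Ocal(1)$ constant, and $\Ocal(1)^{N}$ with $N\sim\log(1/h)$ is a positive power of $1/h$ that could destroy the gain $h^\gamma$. The resolution is that the number of blocks in a non-negligible word is at most $N_1=\Ocal(1)$, since between any two maximal runs of $\Rcal_1$ there is at least one $\Pi\Mcal\Pi$ and there are at most $N_1-1$ runs of $\Rcal_1$ before hitting a negligible $\Rcal_1^{N_1}$; equivalently, each word is a single alternating product with a bounded number of ``switches,'' so the total number of such words is polynomial in $N$ hence $\Ocal((\log(1/h))^{N_1})=h^{-o(1)}$, which is absorbed into $C$ after shrinking $\gamma$ slightly. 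Once the combinatorics is organized this way, concatenating the one large-power estimate with the finitely many uniformly-bounded factors yields $\|M(z,h)^N\|\leq Ch^{\gamma'}h^{C_1\min(0,h^{-1}\Im z)}$ for a new $\gamma'>0$, and renaming $\gamma'\to\gamma$, $c_1'\to c_1$ completes the proof.
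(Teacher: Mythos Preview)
Your combinatorial claim is where the argument breaks. The nilpotence $\Rcal_1^{N_1}=\Ocal(h^\infty)$ only kills words containing $N_1$ \emph{consecutive} factors $\Rcal_1$; it does not bound the number of maximal $\Rcal_1$-runs. The alternating word $(\Pi\Mcal\Pi\,\Rcal_1)^{\lfloor N/2\rfloor}$ has $\sim N/2$ runs, each of length~$1$, and is not excluded by nilpotence, so your assertion that a non-negligible word has at most $N_1$ blocks is false. With $\sim N$ blocks you are back to the loss $\Ocal(1)^{N}=h^{-c}$ you yourself flagged, the pigeonhole producing one long $\Mcal^{b_i}$ collapses, and the word count is the full $2^N=h^{-\log 2}$ rather than polynomial in $N$.

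The paper avoids word-counting entirely by a sandwiching trick: write $M^{N+2N_2}=M^{N_2}M^{N}M^{N_2}$ with $N_2$ fixed and show this equals $M^{N_2}\Mcal^{N}M^{N_2}+\Ocal(h^\infty)$. Pulling out the \emph{first} occurrence of $\Rcal_1$ in $M^N=(\Pi\Mcal\Pi+\Rcal_1)^N$, and then the first inner $(I-\Pi)$ when comparing $(\Pi\Mcal\Pi)^N$ to $\Mcal^N$, produces only $\Ocal(N)$ cross terms, each carrying a single $(I-\Pi)$ sandwiched between $M^{N_2}(\cdots)$ on the left and $(\cdots)M^{\geq N_2}$ on the right. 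That $(I-\Pi)$ forces the wavefront set through a point of the Poincar\'e section outside $V$, hence off $\Gamma_+$ or off $\Gamma_-$; in either case one of the two flanking powers of $M$ propagates that point out of the support of the whole construction, making the term $\Ocal(h^\infty)$. No bound on the number of switches is ever needed.
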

\begin{proof}
    Consider
    \begin{equation*}
        M(z,h)^{N+2N_2}=M(z,h)^{N_2}(\Pi\Mcal(z,h)\Pi+\Rcal_1)^NM(z,h)^{N_2}.
    \end{equation*}
    We claim this is equal to $M(z,h)^{N_2}\Mcal(z,h)^N M(z,h)^{N_2}$ modulo $\Ocal(h^\infty)$, if we take $N_2$ sufficiently large but fixed. It suffices to show
    \begin{equation*}
        M(z,h)^{N_2}(\Pi \Mcal(z,h)\Pi)^j\Rcal_1 M(z,h)^{N-j-1+N_2}=\Ocal(h^\infty),\quad j\in\{0,1,\cdots,N-1\}
    \end{equation*}
    and
    \begin{equation*}
    M(z,h)^{N_2}\Mcal(z,h)^{k}(I-\Pi)\Mcal(z,h) M(z,h)^{N-k-1+N_2}=\Ocal(h^\infty),\quad k\in\{1, \cdots, N-1\}.
    \end{equation*}
    This follows from the observation that if $(x,\xi)$ lies outside $V$ (due to the projection $I-\Pi$ and the definition of $\Rcal_1$), then it has be to disjoint from either $\Gamma_+$ or $\Gamma_-$, which implies that either $F^{N_2}(x,\xi)$ or $F^{-N_2}(x,\xi)$ has to escape $V$ for $N_2$ sufficiently large.
\end{proof}
Proposition~\ref{p:power-M} already implies the spectral gap through Proposition~\ref{p:quant-mono}. But we can further estimate the resolvent as below. By the Grushin problem \eqref{e:grushin-global}, we have
\begin{equation}\label{e:grushin-inv}
    P(z)^{-1}=\frac{i}{h}(E-E_+E_{-+}^{-1}E_-)
\end{equation}
where $E,E_-,E_+,E_{-+}$ are all $\Ocal(1)$. Moreover, since $E_{-+}(z)=M(z,h)-I$,
\begin{equation*}
    \|E_{-+}^{-1}(z)\| \leq 1+\|M(z,h)\|+\cdots + \|M(z,h)^{N-1}\|+ \|M(z,h)^{N}\|\|E_{-+}^{-1}(z)\|.
\end{equation*}
For $c_1\log(1/h)\leq N\leq C\log(1/h)$ and $\Im z\geq -C_1^{-1}\gamma h/2$, we conclude
\begin{equation*}
    \|E_{-+}^{-1}(z)\| \leq 2(1+\|M(z,h)\|+\cdots + \|M(z,h)^{N-1}\|).
\end{equation*}
Similar to the proof of Proposition~\ref{p:power-M}, we have for $2N_2\leq j\leq C\log(1/h)$,
\begin{equation*}
    \|M(z,h)^j\|\leq \|M(z,h)^{N_2}\Mcal(z,h)^{j-2N_2}M(z,h)^{N_2}\|+\Ocal(h^\infty)\leq Ce^{-C_1 j\min(0, h^{-1}\Im z)}.
\end{equation*}
Since we have $c_1\log(1/h)$ many terms, we conclude
\begin{equation*}
    \|E_{-+}^{-1}(z)\| \leq C\log(1/h)h^{C_1\min(0,h^{-1}\Im z)}.
\end{equation*}
By \eqref{e:grushin-inv}, we have
\begin{equation*}
    \|P(z)^{-1}\|_{\bar{H}^{s-1}_h\to \bar{H}^s_h}\leq Ch^{-1}\log(1/h)h^{C_1\min(0,h^{-1}\Im z)}.
\end{equation*}
The resolvent bound \eqref{e:res-bound} follows from the definition \eqref{e:Pz-definition} of $P(z)$ and rescaling.


\appendix

\section{Spectral gap for convex cocompact hyperbolic manifolds}
In this appendix, we discuss a special case of the spectral gap problem in higher dimensions -- convex cocompact hyperbolic manifolds, following Cohen's recent breakthrough \cite{Cohen}. We prove the following
\begin{thm}\label{thm:appendix}
    Let $\Gamma\backslash\HH^{d+1}$ be a convex cocompact hyperbolic manifold. Suppose the limit set $\Lambda_{\Gamma}\subset \Ss^d$ does not contain a round circle, then there is an essential spectral gap for the Laplace operator on $\Gamma\backslash\HH^{d+1}$.
    More precisely, there exists $C_0,\beta>0$ such that the resolvent
    \begin{equation*}
        R(\lambda)=(-\Delta-d^2/4-\lambda^2)^{-1}:L^2_{\rm comp}(\Gamma\backslash \HH^{d+1})\to L^2_{\rm loc}(\Gamma\backslash \HH^{d+1})
    \end{equation*}
    continues holomorphically from $\Im \lambda> d/2$ to $\{|\lambda|>C_0,\, \Im \lambda>-\beta\}$, and we have the following resolvent estimate for any $\epsilon>0$:
    \begin{equation}\label{eq:spectral-gap-line-resolvent}
        \|\chi (-\Delta-d^2/4-\lambda^2)^{-1}\chi\|_{L^2\to L^2}\leq C_{\chi,\epsilon} |\lambda|^{-1-2\min(0,\Im\lambda)+\epsilon},\quad  |\lambda|>C_0,\,\, \Im \lambda > -\beta.
    \end{equation}
    Here $\chi\in C_c^{\infty}(\Gamma\backslash \HH^{d+1})$ is a compactly supported cutoff.
\end{thm}
Note that there are non-trivial examples of limit sets that contain a round circle, see e.g. \cite{MMO}. It is an open problem whether such manifolds have an essential spectral gap.

We expect the method of this paper will be helpful to further studies of the variable curvature case for asymptotically hyperbolic manifolds in higher dimensions.

Jin--Zhang--Zhang \cite{JZZ} recently gave a quantitative bound in Theorem~\ref{thm:fup-higher} below and an explicit essential spectral gap for quasi-Fuchsian groups.

\subsection{Convex cocompact hyperbolic manifolds}\label{subsec:horocycle}
Convex cocompact hyperbolic manifolds are manifolds of the form $\Gamma\backslash\HH^{d+1}$ such that the convex core is compact, where $\Gamma$ is a discrete subgroup of isometries. We give a brief review here following \cite{DFG} with a focus on the trapping of its dynamics.

The hyperbolic space $\HH^{d+1}$ has (orientation preserving) isometry group $G=SO(d+1,1)_0$. This can be seen by identifying $\HH^{d+1}$ with the sheet $x_0>0$ of 
\begin{equation}\label{eq:hyperboloid}
    \langle x,x\rangle_{\eta}=x_1^2+\cdots+x_{d+1}^2-x_0^2=-1
\end{equation}
in the Minkowski space $\RR^{d+1,1}$ with metric $\eta=\sum_{1\leq j \leq d+1}dx_j^2-dx_0^2$. Let $\Gamma\subset G$ be a discrete torsion-free subgroup acting on $\HH^{d+1}$ (strictly speaking, one needs to consider the non-orientable case; but we can always pass to an orientable double cover). We compactify $\HH^{d+1}$ using the ball model (with the metric blowing up near the boundary). The \textit{limit set} $\Lambda_{\Gamma}\subset \partial \HH^{d+1}=\Ss^{d}$ is defined as the set of limit points of a $\Gamma$-orbit of a point $\mathfrak{o}\in \HH^{d+1}$. The \textit{convex core} is defined as
\begin{equation*}
   \Gamma\backslash \{x\in \HH^{d+1}: \text{there exists a geodesic } \gamma(t) \text{  such that } \gamma(0)=x \text{ and }\gamma(t)\to \Lambda_{\Gamma} \text{ as } t \to \pm \infty\}.
\end{equation*}
The manifold $\Gamma\backslash \HH^{d+1}$ is called \textit{convex cocompact} if its convex core is compact. Let $\pi: S^*X\to X$ be the projection and $K_0\subset S^*X$ be the trapped set defined as in \eqref{eq:inout}. It is not hard to see the convex core is $\pi(K_0)$ and the manifold is convex cocompact if and only if its trapped set is compact. Moreover, the trapped set has the description that its lift to $S^*\HH^{d+1}$ is given by
\begin{equation*}
    \widetilde{K}_0=\{(x,\xi)\in S^*\HH^{d+1}: e^{tH_p}(x,\xi)\to \Lambda_{\Gamma},\,\, t \to \pm \infty\} \simeq (\Lambda_{\Gamma}\times\Lambda_{\Gamma}\setminus \Delta_{\Lambda_{\Gamma}})\times \RR
\end{equation*}
where $\Delta_{\Lambda_{\Gamma}}$ is the diagonal.

It is also helpful to think about the lift to the frame bundle $F^*X$ with each fiber corresponding to the (positively oriented) orthonormal frames in $T^*X$. We have $F^*X \cong \Gamma\backslash G$ as well as $X \cong \Gamma\backslash G/SO(d+1)$ where $SO(d+1)$ in the maximal compact subgroup in $G$ and $S^*X\cong \Gamma \backslash G/ SO(d)$ where $SO(d)$ embeds into the upper diagonal of $SO(d+1)$. There are several invariant vector fields on $G$ that will be useful later.
\begin{enumerate}
    \item The geodesic vector field
    \begin{equation*}
        Y=\begin{pmatrix}
        0&\cdots &0&0\\
        \vdots&\ddots&\vdots&\vdots\\
        0&\cdots &0 & 1\\
        0&\cdots &1 &0
        \end{pmatrix}.
    \end{equation*}
    \item The horocyclic vector field
    \begin{equation*}
        U_j^{\pm}=\begin{pmatrix}
        0&\cdots &0&\cdots&0&0\\
        \vdots&\ddots&\vdots&\ddots&\vdots&\vdots\\
        0&\cdots&0&\cdots& \pm 1&-1\\
        \vdots&\ddots&\vdots&\ddots&\vdots&\vdots\\
        0&\cdots&\mp 1 &\cdots &0 & 0\\
        0&\cdots&-1 &\cdots &0 &0
        \end{pmatrix},\quad j=1,2,\cdots, d,
    \end{equation*}
    where the nonzero entries are in the $(d+1-j)$-th row and $(d+1-j)$-th column.
\end{enumerate}
We have
\begin{equation}\label{eq:XU-communtator}
    [Y,U_{j}^{\pm}]=\pm U_j^{\pm},\quad [U_j^{+},U_j^{-}]=2Y.
\end{equation}
    Note that $Y$ commutes with rotational vector fields in $\mathfrak{so}(d)$ and consequently descends to $S^*X$. However, $U_{j}^{\pm}$ does not commute with the rotational vector fields in $\mathfrak{so}(d)$ and does not descend to $S^*X$. Since
\begin{equation*}
    [U_i^{\pm}, U_j^{\pm}]=0,\quad [U_j^{\pm},\mathfrak{so}(d)]\in \mathrm{span}(U^{\pm}),\quad [U_i^{\pm},U_j^{\mp}]\in \mathfrak{so}(d),\quad i\neq j,
\end{equation*}
$\mathrm{span}(U^{\pm})$ is invariant under $SO(d)$ and thus descends to $S^*X$.

By \eqref{eq:XU-communtator}, the stable bundle $E_s$ is spanned by projections of $U_j^+$ and the unstable bundle $E_u$ is spanned by projections of $U_j^{-}$. Let $\pi_S: F^*X\to S^*X$ be the projection, we can also write
\begin{equation*}
    \pi_S^*(E_{s/u}) = \mathrm{span}(U^{\pm}) \oplus \mathfrak{so}(d).
\end{equation*}

Now we write down the flows in coordinates. In the hyperboloid model \eqref{eq:hyperboloid}, the sphere bundle $S^*\HH^{d+1}$ can be identified with
\begin{equation*}
    \{(x,\xi)\in \RR^{d+1,1}: \langle x,x\rangle_{\eta}=-1,\,\, \langle x,\xi\rangle_{\eta}=0,\,\, \langle \xi,\xi\rangle_{\eta}=1,\,\, x_0>0\}
\end{equation*}
and the frame bundle $F^*\HH^{d+1}$ can be identifies as
\begin{equation*}
    \{(x,\xi^{1},\cdots,\xi^{d+1}): x\in \HH^{d+1},\,\, \xi^{1},\cdots, \xi^{d+1}\in T^* \HH^{d+1} \text{ is a positively oriented orthonormal basis}\}.
\end{equation*}
The map $\pi_S$ can be identified with $(x,\xi^1,\cdots,\xi^{d+1})\to (x,\xi^1)$.
The geodesic flow $e^{tY}$ acts on $S^*\HH^{d+1}$ by
\begin{equation*}
    e^{tY}(x,\xi)= (x\cosh t+\xi \sinh t,x\sinh t+\xi \cosh t)
\end{equation*}
and acts on $F^*\HH^{d+1}$ by
\begin{equation*}
    e^{tY}(x,\xi^1,\cdots,\xi^{d+1})= (x\cosh t+\xi^1 \sinh t,x\sinh t+\xi^1 \cosh t,\xi^2,\cdots,  \xi^{d+1}).
\end{equation*}
The horocyclic vector field $U_j^{\pm}$ acts on $F^*\HH^{d+1}$ by (here the matrices act on the right)
\begin{equation*}
\begin{split}
    &e^{sU_j^{\pm}}(x,\xi^1,\cdots,\xi^{d+1})\\
    =& \left(x-s\xi^{j+1}+\frac{s^2}{2}(x\pm \xi^1),\xi^1\pm s\xi^{j+1}-\frac{s^2}{2}(\xi^1\pm x),\xi^2,\cdots,\xi^{j+1}\mp s\xi^{1}-s x ,\cdots, \xi^{d+1}\right).
    \end{split}
\end{equation*}
Under the Hopf parametrization (we denote $x=(x',x_0)$ and $\xi=(\xi',\xi_0)$)
\begin{equation*}
\begin{split}
    \Phi: S^*\HH^{d+1}\simeq (\Ss_+^{d}\times\Ss_-^{d}\setminus \Delta_{\Ss^d})\times \RR\\
    (x,\xi) \to \left(\frac{x'+\xi'}{|x'+\xi'|},\frac{x'-\xi'}{|x'-\xi'|}, T(x,\xi) \right)
\end{split}
\end{equation*}
where $T(x,\xi)$ is a parameter on the geodesic $e^{tY}(x,\xi)$, we note 
\begin{equation*}
\begin{split}
    \Phi(\pi_S(e^{sU_j^{+}}(x,\xi^1,\cdots,\xi^{d+1}))) &= \left(\frac{x'+(\xi^1)'}{|x'+(\xi^1)'|}, \frac{x'-(\xi^1)'-2s(\xi^{j+1})'+s^2(x'+(\xi^1)')}{|x'-(\xi^1)'-2s(\xi^{j+1})'+s^2(x'+(\xi^1)')|},T\right);\\
    \Phi(\pi_S(e^{sU_j^{-}}(x,\xi^1,\cdots,\xi^{d+1}))) &= \left(\frac{x'+(\xi^1)'-2s(\xi^{j+1})'+s^2(x'-(\xi^1)')}{|x'+(\xi^1)'-2s(\xi^{j+1})'+s^2(x'-(\xi^1)')|},\frac{x'-(\xi^1)'}{|x'-(\xi^1)'|}, T\right).
\end{split}
\end{equation*}
We claim the nontrivial component  
\begin{equation}\label{eq:curve}
   \frac{x'\mp(\xi^1)'-2s(\xi^{j+1})'+s^2(x'\pm(\xi^1)')}{|x'\mp(\xi^1)'-2s(\xi^{j+1})'+s^2(x'\pm(\xi^1)')|}
\end{equation}
gives a circle segment on the sphere $\Ss^d$. This is because $x\mp \xi^1-2s\xi^{j+1}+s^2(x\pm\xi^1)$ lies in the null cone of the $3$-dimensional space $V_{x,\xi^1,\xi^{j+1}}$ spanned by $x,\xi^1$ and $\xi^{j+1}$, and the claim follows from the fact that the intersection of $\Ss^d$ with the two dimensional plane $ V_{x,\xi^1,\xi^{j+1}}\cap \{x_0=1\}$ is a circle. It is also clear that the curve \eqref{eq:curve} cannot be a single point (otherwise, $x',(\xi^1)', (\xi^{j+1})'$ are linearly dependent, which is impossible).

\subsection{Spectral gap}\label{subsec:FUP-unitary}
We recall the fractal uncertainty principle in the regime of unitarity bound.
\begin{defi}\label{def:porous}
    Let $A$ be a compact set in $\RR^d$. Let $\nu>0$ and $0 \leq \alpha_0<\alpha_1$. \begin{itemize}
        \item We say $A$ is $\nu$-porous from scales $\alpha_0$ to $\alpha_1$, if for any ball $B(x,r)$ of radius $r\in (\alpha_0,\alpha_1)$, there exists a ball $B(x',\nu r)$ of radius $\nu r$ inside $B(x,r)$ such that $A\cap B(x',\nu r)=\varnothing$.
        \item We say $A$ is $\nu$-porous along lines from scales $\alpha_0$ to $\alpha_1$, if for any line segment $\ell$ of length $r\in (\alpha_0, \alpha_1)$, there exists a point $x\in \ell$, such that $ A \cap B(x,\nu r) =\varnothing$.
    \end{itemize}
\end{defi}
If a set is $\nu$-porous ($\nu$-porous along lines) from scales $\alpha_0$ to $\alpha_1$ for some $\nu>0$, we will say it is porous (line porous) from scales $\alpha_0$ to $\alpha_1$.

Bourgain--Dyatlov \cite{bourgain2018spectral} establishes the following fractal uncertainty principle (see \cite{JiZh} for a quantitative version).
\begin{thm}
Let $\nu>0$ and $h\in (0,1)$.
    Suppose $A_h$ and $B_h$ are two compact subsets of $[0,1]$ that are $\nu$-porous from scales $h$ to $1$, then there exist $\beta=\beta(\nu)>0$ and $C=C(\nu)>0$ independent of $h$, such that
    \begin{equation*}
        \|\mathbbm{1}_{A_h}\mathcal{F}_h\mathbbm{1}_{B_h}\|_{L^2(\RR)\to L^2(\RR)}\leq Ch^{\beta}.
    \end{equation*}
\end{thm}

In a recent breakthrough, Alex Cohen \cite{Cohen} proves a fractal uncertainty principle in higher dimensions (see \cite{JZZ} for a quantitative version).
\begin{thm}\label{thm:fup-higher}
Let $v>0$ and $h\in (0,1)$.
    Suppose $A_h$ and $B_h$ are two compact subsets of $[0,1]^d$ that
    \begin{itemize}
        \item $A_h$ is $\nu$-porous from scales $h$ to $1$;
        \item $B_h$ is $\nu$-porous along lines from scales $h$ to 1.
    \end{itemize} 
    Then there exist $\beta=\beta(\nu,d)>0$ and $C=C(\nu,d)>0$ independent of $h$, such that
    \begin{equation*}
        \|\mathbbm{1}_{A_h}\mathcal{F}_h\mathbbm{1}_{B_h}\|_{L^2(\RR^d)\to L^2(\RR^d)}\leq Ch^{\beta}.
    \end{equation*}
\end{thm}
By \cite[Proposition 4.14]{kim}, Cohen's theorem applies to a general non-degenerate phase. By \cite{dyatlov2016zahl} we have the following corollary.
\begin{cor}\label{cor:spectral-gap-line}
    Let $X=\Gamma\backslash \HH^{d+1}$ be a convex cocompact hyperbolic manifold. If the limit set $\Lambda_{\Gamma}$ is line porous, then there is an essential spectral gap, and \eqref{eq:spectral-gap-line-resolvent} holds.
\end{cor}

This leaves the question of which limit sets are line porous. We first give some basic properties of line porosity. 
\begin{lem}\label{lem:line-porous}
Let $A\subset \RR^d$ be a compact set.
\begin{enumerate}
    \item Suppose $A$ is $\nu$-porous along lines from scales $\alpha_0$ to $\alpha_1$, and $F:\RR^d\to \RR^d$ is a diffeomorphism such that
    \begin{equation*}
        C_1^{-1}|x-y|\leq |F(x)-F(y)|\leq C_1|x-y|,\quad |\partial_i\partial_j F(x)|\leq C_2.
    \end{equation*}
    Suppose $0<\alpha_0<\alpha_1<\frac{\nu}{C_1C_2d}$.
    Then $F(A)$ is $\frac{\nu}{2C_1^2}$-porous along lines from scales $C_1\alpha_0$ to $C_1\alpha_1$.
    \item Suppose there exists $\nu>0$, such that any line segment $\ell$ of length $r\in (\alpha_0,\alpha_1)$ with midpoint in $A$ contains a point $x\in \ell$ such that $B(x,\nu r)\cap A=\varnothing$. Then $A$ is $\nu/2$-porous along lines from scales $\alpha_0$ to $\alpha_1$.
\end{enumerate}
\end{lem}
\begin{proof}
    \begin{enumerate}
        \item This is \cite[Lemma 4.10]{kim}.
        \item If $A$ is not $\nu/2$-porous from scales $\alpha_0$ to $\alpha_1$, then there exists a line segment $\ell$ with length $r\in (\alpha_0,\alpha_1)$, such that the middle point $x$ of $\ell$ satisfies $B(x,\nu r/2)\cap A \neq \varnothing$. Suppose $x'\in B(x,\nu r/2)\cap A$. Let $\ell'$ be the line segment with midpoint $x'$ parallel to $\ell$ and of length $r$. Then for any $y\in \ell'$, we have $B(y,\nu r) \cap A \neq \varnothing$, which contradicts the assumption. \qedhere
    \end{enumerate}
\end{proof}
In particular, by Lemma~\ref{lem:line-porous} (1), it makes sense to talk about line porosity on a manifold. We provide the following characterization for line porosity of the limit set $\Lambda_{\Gamma} \subset \Ss^{d}$.
\begin{prop}\label{prop:line-porous-criterion}
    The limit set $\Lambda_{\Gamma}\subset \Ss^d$ is line porous from scales $0$ to $1$ if and only if it does not contain a round circle.
\end{prop}
\begin{proof}
    If the limit set $\Lambda_\Gamma$ contains a circle, then it is not line porous because a circle is not line porous. We would like to show the other direction, i.e., if $\Lambda_{\Gamma}$ is not line porous, then it must contain a circle.

    Let $K_S\subset F^*X$ be the trapped set in the frame bundle. First, we show that the following assumption on the frame bundle implies the line porosity of $\Lambda_{\Gamma}$: there exists a constant $T>0$ such that for any $z\in K_S$, there exists $s \in [-T,T]$ such that
    \begin{equation}\label{eq:assumption-frame}
        e^{sU_{1}^{+}}z \notin K_S.
    \end{equation}
    By compactness of $K_S$, this implies that there exists a uniform constant $R>0$ such that for some $s_1\in [-T,T]$, we have $ B(e^{s_1 U_{1}^{+}}z,R) \cap K_S = \varnothing$. For any point $z_0\in K_S$ and a flow line $\ell_{z_0,s_0}=\{e^{sU_1^{+}} z_0 : -s_0\leq s \leq s_0\}$ with $s_0\in (0,1)$, we may choose $t_0\in\RR$ such that $e^{t_0}s_0=T$. Since $ e^{tY}\circ e^{sU_1^{+}}(z)=e^{e^{-t}sU_1^{+}}\circ e^{tY}(z)$ (note the composition is different from matrix multiplication since the action is on the right), we have
    \begin{equation*}
        e^{-t_0Y}(\ell_{z_0,s_0})= \ell_{e^{-t_0 Y}(z_0),e^{t_0}s_0}=\ell_{e^{-t_0 Y}(z_0),T}.
    \end{equation*}
    By our assumption \eqref{eq:assumption-frame}, there exists $s_1\in [-T,T]$ such that
    \begin{equation*}
        B\left(e^{s_1U_1^{+}}\left(e^{-t_0 Y}(z_0)\right),R\right) \cap K_S =\varnothing.
    \end{equation*}
    In particular, for any $U^+=\sum a_jU_j^+$ and $|a_j|\leq R'$ for some $R'=R'(R)>0$, we have
    \begin{equation*}
        e^{-t_0 Y}\left(e^{e^{-t_0}U^+}\circ e^{e^{-t_0}s_1U_1^{+}}(z_0)\right)=e^{U^+}\circ e^{s_1U_1^{+}}\left(e^{-t_0 Y}(z_0) \right) \notin K_S.
    \end{equation*}
    In other words, $z_1=e^{e^{-t_0}s_1U_1^+}(z_0) \in \ell_{z_0,s_0}$ (note $e^{-t_0}s_1\in [-s_0,s_0]$) satisfies
    \begin{equation}\label{eq:line-porous-proof1}
        e^{\sum a_j U_j^+}(z_1)\notin K_S,\quad |a_j|\leq s_0R' T^{-1}.
    \end{equation}
    Now to show $\Lambda_{\Gamma}$ is line porous, we take $x\in \Lambda_{\Gamma}$. Pick another point $x'\in \Lambda_{\Gamma}$ and consider $y\in S^*\HH^{d+1}$ that corresponds to $(x',x)$ under the Hopf parametrization. More precisely, the geodesic emanating from $y$ converges to $x'$ in the forward direction and converges to $x$ in the backward direction. Let $y_0\in S^*X$ be the projection of $y$ to the quotient. Let $z_0\in F^*X$ be a lift of $y_0$ in the frame bundle and $\ell_{z_0,s_0}$ be a path going through $z_0$. Since $z_0\in K_S$, the above reasoning shows that there is a point $z_1\in \ell_{z_0,s_0}$ satisfying \eqref{eq:line-porous-proof1}. Projecting this back to a neighbourhood of $x$ shows that a circle segment corresponding to $\ell_{z_0,s_0}$ contains a point such that its neighbourhood of size $\simeq s_0$ does not intersect $\Lambda_{\Gamma}$. Note by changing the choice of $x'$ and the lift $z_0$ in the frame bundle, this covers different circles (in particular, circles in different directions, see \eqref{eq:curve}) passing though the point $x$. Since the notion of line porosity is invariant under local uniform change of coordinates (see Lemma~\ref{lem:line-porous}), we conclude that $\Lambda_{\Gamma}$ is line porous from scales $0$ to $1$.
    
    Finally, we show that if \eqref{eq:assumption-frame} does not hold, then $\Lambda_{\Gamma}$ must contain a circle. Suppose \eqref{eq:assumption-frame} does not hold, then there exist $z_n\in K_S$ and $T_n\to \infty$ such that 
    \begin{equation*}
        e^{sU_1^+}z_n \in K_S,\quad s\in[-T_n,T_n].
    \end{equation*}
    Since $K_S$ is compact, there exists a limit point $z\in K_S$ of $\{z_n\}$, such that
    \begin{equation*}
        e^{sU_1^+}z \in K_S,\quad s\in\RR.
    \end{equation*}
    Projecting this to $\Ss^{d}$ gives a circle (see the discussion at the end of \S\ref{subsec:horocycle}).
\end{proof}
Combining Corollary~\ref{cor:spectral-gap-line} and Proposition~\ref{prop:line-porous-criterion}, we prove Theorem~\ref{thm:appendix}.



\end{document}